\documentclass{aupl}

\usepackage{
amsfonts,
latexsym,
amssymb,
enumerate,
rotate,
mathrsfs,
centernot,
color,
}

\usepackage[all]{xy}

\newcommand{\labbel}[1]{\label{#1} [[{\bf #1}]]}  
\newcommand{\bibbitem}[1]{\bibitem{#1} [[{\bf #1}]]}  
\renewcommand{\labbel}{\label} \renewcommand{\bibbitem}{\bibitem}

 \newcommand{\Thetarel}{\mathrel{\Theta}}

\definecolor{bluee}{RGB}{0,0,25}

\newcommand{\arxiv}[1]{{\color{bluee}#1}}

\usepackage{stackengine,
graphicx}
\stackMath
\def\dashsubseteq{\mathrel{
  \stackinset{l}{0pt}{c}{}{\wrule[2pt]{5pt}{.5pt}}{
  \stackinset{l}{1.6pt}{c}{}{\wrule[-10pt]{.5pt}{3pt}}{
  \stackinset{l}{0.5pt}{c}{1.5pt}{\rotatebox[origin=center]{45}{\wrule[2pt]{.5pt}{3pt}}}{
  \stackinset{l}{0.5pt}{c}{}{\rotatebox[origin=center]{-45}{\wrule[-5pt]{.5pt}{3pt}}}{
  \stackinset{l}{3.35pt}{c}{}{\wrule[-4pt]{.5pt}{10pt}}{
  \stackinset{l}{5.1pt}{c}{}{\wrule[-4pt]{.5pt}{10pt}}{
 \subseteq}}}}}}
}}
\newcommand\wrule[3][0pt]{\textcolor{white}{\rule[#1]{#2}{#3}}}

\newtheorem{theorem}{Theorem}[section]
\newtheorem{lemma}[theorem]{Lemma}

\newtheorem{proposition}[theorem]{Proposition}

\newtheorem*{claim*}{Claim}

\newtheorem*{theorem*}{Theorem}
\newtheorem*{proposition*}{Proposition}
\newtheorem*{corollary*}{Corollary}
\newtheorem*{lemma*}{Lemma}
\newtheorem*{scholion*}{Scholion}

\theoremstyle{definition}
\newtheorem{definition}[theorem]{Definition}

\newtheorem{problem}[theorem]{Problem}

\theoremstyle{remark}
\newtheorem{remark}[theorem]{Remark}

\newtheorem*{remark*}{Remark}
\newtheorem*{remarks*}{Remarks}
 
\newtheorem{example}[theorem]{Example}

\newtheorem*{acknowledgement}{Acknowledgement}

\newtheorem*{observation*}{Observation}

 \allowdisplaybreaks[1]

\numberwithin{equation}{section}

\begin{document}

\title{A hyperamalgamation property}

\author{Paolo Lipparini} 
\address{Dipartimento di Hypermatematica\\Viale della  Ricerca
Scientifica\\Universit\`a di Roma ``Tor Vergata'' 
\\I-00133 ROME ITALY}

\email{lipparin@axp.mat.uniroma2.it}

\urladdr{http://www.mat.uniroma2.it/\textasciitilde lipparin}

\subjclass{03C52;  06F99;  06E25;  06A15;  03G25}

\keywords{Additive closure operation; additive isotone operation;
  amalgamation property;
superamalgamation; hyperamalgamation property; semilattice;  
complete lattice; Boolean algebra}

\thanks{Work performed under the auspices of G.N.S.A.G.A. Work 
partially supported by PRIN 2012 ``Logica, Modelli e Insiemi''.
The author acknowledges the MIUR Department Project awarded to the
Department of Mathematics, University of Rome Tor Vergata, CUP
E83C18000100006.}

\begin{abstract}
The superamalgamation property is a strengthening of the amalgamation property 
and has found many applications in algebraic logic,
more recently, also in model theory.
Here we introduce an even stronger notion 
we call the \emph{hyperamalgamation property}.
We show that lattices, semilattices, Boolean algebras
and dual Heyting algebras have the hyperamalgamation property. 
As a first application, we obtain the amalgamation property
for semilattices, Boolean algebras
and Heyting algebras with  further operators.
\end{abstract}

\maketitle

\section{Introduction} \labbel{intro}

 The amalgamation property (AP) has proven to be a
very useful tool 
in algebra and logic, e.~g.,  \cite{H, KMPT,MMT}.
For recent applications in computer science see \cite{BGR}
and further references there. A stronger notion,
the superamalgamation property,  
is applicable to structures with a partial order,
and has  particular interest in 
algebraic logic \cite{GM}. 

For definiteness, here we work in the setting
of model theory, namely, a \emph{model}, or a \emph{structure} 
is, roughly, a set together with families of operations and relations.
 However, the amalgamation property is loosely related to 
the categorical notion of a push-out: if a push out exists,
then it is an amalgamating structure if and only if
 the associated morphisms are embeddings.
On one hand, there are clear
model-theoretical criteria implying that the class
of  models of some theory  has push-outs: 
this applies to every
theory axiomatizable by universal strict  Horn sentences.
On the other hand, quite surprisingly, there seems to be
no easy widely applicable condition which  implies AP 
and is formulated in terms of the form
of the axioms of some theory.
The problem has been raised 
by B. J{\'o}nsson \cite{J1} and seems to have a positive solution
only in a small handful of cases \cite{apuvar}.

Luckily, this is only half the story:
there are many cases in which 
theories with AP can be 
combined  \cite{GG,apu} 
 or
modified \cite{BHKK,GP}  in order to obtain other theories with 
AP.
In particular, in \cite{sapimpap}
we showed that if some class $\mathcal S$ of ordered structures
has  (1)  the superamalgamation property
and (2) is such that every model of $\mathcal S$ can be extended
to a model in $\mathcal S$ whose order is  a complete bounded lattice,
then the class of expansions of models of $\mathcal S$ with
an added closure operation  has still the superamalgamation property.
In the above sentence ``closure''  can be replaced by 
any one of the following:
isotone, extensive, isotone and extensive, idempotent, 
idempotent and extensive, involution, antitone.
The result applies also to $n$-ary operations, possibly
supposed to be isotone on a set of arguments
and antitone on another set of arguments.
See Definition \ref{list} for an explicit list. 

The above results are quite general: we can take as $\mathcal S$
any one of the following classes: 
partially ordered sets,  meet semilattices,  join semilattices, 
lattices,  Boolean algebras,  Heyting algebras, which all
satisfies the above conditions (1) and (2).

The case not treated in \cite{sapimpap}
is the case of an additive operation (in an
ordered structure whose order is at least a join semilattice).
Recall that a unary operation $K$  is
\emph{additive} if $K(x+y)=Kx+Ky$, for all 
$x$ and $y$ in the domain, where $+$
denotes the join operation.    
Note that we do not assume additivity in the
definition of a closure operation, unless explicitly 
otherwise stated.
 
In order to deal with additive operations, 
the superamalgamation property seems to be not enough;
however, here we introduce a stronger
``hyperamalgamation'' property which does the job
and has probably independent interest.
At a first glimpse, the hyperamalgamation property 
might bear some resemblance  
with the Maehara  Interpolation Property \cite[p. 43]{MMT} 
but here we shall  not try to pursue the analogy further.

\section{Preliminaries} \labbel{prel} 

As mentioned, we will work in the framework of
Model Theory \cite{H}. 
\emph{Structure} and  \emph{model}
will be used as synonyms.  
Formally, in Model Theory
 each structure is appropriate
for some specific language containing 
(possibly empty sets of) function, relation and
constant symbols. 
We will write $f_ {\mathbf A}$ to denote the 
interpretation of the function symbol $f$
in the structure $\mathbf A$. However,
when there is no danger of confusion,
we will simply write $f$ in place of $f _ {\mathbf A} $.
As custom, the underlying set of the structure $\mathbf A$ 
will be denoted by $A$. This will never generate ambiguities,
though sometimes two different structures $\mathbf A$ 
and  $\mathbf A^-$ will be considered on $A$.
Usually, $\mathbf A^-$ will be a \emph{reduct} of $\mathbf A$,
namely,  the language of $\mathbf A^-$ is a subset of 
the language of $\mathbf A$, with the common symbols
interpreted in the same way.
In the standard model-theoretical terminology,
in the above situation one also says  that
 $\mathbf A$ is an \emph{expansion} of $\mathbf A^-$.
Note that the notion of expansion is distinct from
the notion of \emph{extension}: in the former case
the language is expanded, but the underlying set remains the same;
in the latter case the   underlying set is extended and it is the
language which remains the same.

Semilattices will be  always considered as 
algebraic structures with a single 
binary operation $+$ and will be intended as \emph{join-semilattices},
namely,  the induced order is defined
by $a \leq b$ if $a+b=b$.  
See \cite{CHK,E,G,Ha} 
for further details about ordered sets, lattices and semilattices.

We refer to the quoted works
for other unexplained notions and notation.
Essentially we use the same conventions as in  \cite{sapimpap},
but here lattice operations
will be denoted by $+$ for join and by
$\cdot$ or juxtaposition for meet, as more usual in the
distributive case.

\section{A hyperamalgamation property} \labbel{hypsec}

Recall that a class $\mathcal S$
 of structures  of the same type,
 has the \emph{amalgamation property}
(AP) if, whenever 
$\mathbf A, \mathbf B, \mathbf C \in \mathcal S$,
 $ \iota _{\mathbf C, \mathbf A} \colon \mathbf C \rightarrowtail \mathbf A$
and 
 $ \iota _{\mathbf C, \mathbf B} \colon \mathbf C \rightarrowtail \mathbf B$
are embeddings,  there is a structure
$\mathbf D \in \mathcal S$ and  embeddings
$ \iota _{\mathbf A, \mathbf D} \colon \mathbf A \rightarrowtail \mathbf D$
and 
 $ \iota _{\mathbf B, \mathbf D} \colon \mathbf B \rightarrowtail \mathbf D$
such that 
$\iota _{\mathbf C, \mathbf A} \circ \iota _{\mathbf A, \mathbf D}
=
\iota _{\mathbf C, \mathbf B} \circ \iota _{\mathbf B, \mathbf D}$. 
 
If, in addition, 
the above model and embeddings can be
always chosen in such a way that
 the intersection
of the images of $\iota _{\mathbf A, \mathbf D}$
and $\iota _{\mathbf B, \mathbf D}$ 
is equal to the image of 
$\iota _{\mathbf C, \mathbf A} \circ \iota _{\mathbf A, \mathbf D}$, then
$\mathcal S$ is said to have the \emph{strong amalgamation property}
(SAP).

\begin{remark} \labbel{cuia}    
If $\mathcal S$ is closed under isomorphism, then, by taking 
isomorphic copies, in the  definition of SAP 
(but not of AP)
we can   reduce ourselves to the case 
when embeddings are inclusions. In this case, the above condition
for SAP
simplifies.

A class $\mathcal S$  closed under isomorphism has
the strong amalgamation property if and only if,
for all $\mathbf A, \mathbf B, \mathbf C \in \mathcal S$
such that 
 $ \mathbf C \subseteq  \mathbf A$,
 $ \mathbf C \subseteq  \mathbf B$
and $C=A \cap B$,
 there is a structure
$\mathbf D \in \mathcal S$ such that 
$ \mathbf A \subseteq  \mathbf D$ and
$ \mathbf B \subseteq  \mathbf D$.
\begin{equation*}
\phantom{\qquad \qquad  \text{ (with $C=A \cap B$)}  }
 \begin{matrix} 
 \mathbf D \cr
$\rotatebox[origin=c]{45}{$ \dashsubseteq $}$
 \ \quad\  
$\reflectbox {\rotatebox[origin=c]{45}{$ \dashsubseteq $}}$ \cr
\mathbf A \quad \quad \quad \quad \mathbf B \cr
$\rotatebox[origin=c]{-45}{$ \supseteq $}$ \ \quad\ 
$\rotatebox[origin=c]{45}{$ \subseteq $}$
 \cr
 \mathbf C
 \end{matrix}   
\qquad \qquad  \text{ (with $C=A \cap B$)}  
\end{equation*}

We now describe further variations of AP.
For the sake of simplicity, we will always work in the context 
described in the present remark, so classes of similar
structures will be always supposed to be closed under isomorphism.
\end{remark} 

A stronger property has proved very useful in algebraic logic \cite{GM,KH}.
If $\mathcal S$ is a class of ordered structures, 
then $\mathcal S$ has the \emph{superamalgamation property}
(superAP) if, under the assumptions in the above remark, 
there exists some $\mathbf D$ as above 
with the additional property that, 
for every  $a \in A $ and $b \in B $, 
  \begin{enumerate}[(a)]
\item 
if $a \leq _{\mathbf D}  b $,  
then there is $c \in C$ such that
  $a \leq_{\mathbf A} c $ and
$ c \leq _{\mathbf B} b $, and, symmetrically,
\item 
if $b \leq _{\mathbf D}  a $,  
then there is $c \in C$ such that
  $b \leq_{\mathbf B} c $ and
$ c \leq _{\mathbf A} a $.
  \end{enumerate}

Note that superAP implies SAP.
In \cite{sapimpap} we have showed that superamalgamation 
is useful also from a model theoretical point of view.
Here we introduce a stronger property
which shall be needed in order to deal with
additive operations.

\begin{definition} \labbel{hypdef1}
Throughout the paper,  $\mathcal S$
 is a class of structures  of the same type,
closed under isomorphism and
with at least a semilattice operation.
For short, we will call $\mathcal S$  a
\emph{class of semilattice structures}, and members of 
$\mathcal S$ will be 
correspondingly called \emph{semilattice structures}.
In particular, a semilattice structure might
(or might not) contain further operations, relations
and constants, besides join $+$.

We say that some class $\mathcal S$  
 of semilattice structures 
has the \emph{$1$-step hyperamalgamation property}
 if, whenever 
$\mathbf A, \mathbf B, \mathbf C \in \mathcal S$,
 $ \mathbf C \subseteq  \mathbf A$,
 $ \mathbf C \subseteq  \mathbf B$
and $C=A \cap B$,
 there is a structure
$\mathbf D \in \mathcal S$ such that 
$ \mathbf A \subseteq  \mathbf D$ and
$ \mathbf B \subseteq  \mathbf D$
(so far, we have just repeated the definition of SAP), and, moreover,
  \begin{enumerate}[(a)]
\item
if  $a, a_1 \in A $, $b_1 \in B $ and
 $a \leq _{\mathbf D} 
a_1 +_{\mathbf D} b_1 $,  
then there is $c \in C$ such that
  $a \leq_{\mathbf A} a_1 +_{\mathbf A} c $ and
$ c \leq _{\mathbf B} b_1 $; and, 
\item 
if  $a_1 \in A $, $b, b_1 \in B $ and
 $b \leq _{\mathbf D}  b_1 +_{\mathbf D} a_1 $,  
then there is $c \in C$ such that
  $b \leq_{\mathbf B} b_1 +_{\mathbf B} c $ and
$ c \leq _{\mathbf A} a_1 $.
\end{enumerate}
 \end{definition}   

In principle, our results apply to structures
with many semilattice operations.
In such a situation, we always suppose that 
some specific semilattice operation
is selected; the hyperamalgamation properties 
are always meant to refer to such an operation.

If structures in $\mathcal S$ have a $0$ 
(a minimal element, interpreted as a constant, hence preserved
by embeddings),
then the $1$-step hyperamalgamation property
  implies the superamalgamation property:
just take $a_1=0$, respectively, $b_1=0$. 
\arxiv{More generally, even without the assumption of the
existence of a $0$, any instance of the 
hyperamalgamation property implies the corresponding
instance of the superamalgamation property, provided
that every $b_1 \in B$ is $\geq$ than some  
$c^*$, for $c^* \in C$  
and symmetrically for each $a_1 \in A$. Say, in the former case, 
take $c^*$ in place of $a_1$ in (a);
then a separating element is $c^* + c$.  }

Lattices and semilattices fail to have the 
$1$-step hyperamalgamation property (see Example \ref{no1} below),
hence we need a ``many steps'' weaker version.

\begin{definition} \labbel{hypdef}    
If  $\mathcal S$
 is a class of semilattice structures,
we say that $\mathcal S$  has the \emph{hyperamalgamation property}
 if, whenever 
 $ \mathbf C \subseteq  \mathbf A$,
 $ \mathbf C \subseteq  \mathbf B$ in $\mathcal S$ 
and $C=A \cap B$, 
 there is a structure
$\mathbf D \in \mathcal S$ such that 
$ \mathbf A \subseteq  \mathbf D$,
$ \mathbf B \subseteq  \mathbf D$ and, moreover,
\begin{enumerate}
\item[(c)]
for every  $a, a_1 \in A $ and $b_1 \in B $ such that 
 $a \leq _{\mathbf D} a_1 +_{\mathbf D} b_1 $,  
 there are $n \in \mathbb N^+$  and   sequences
$a_2, \dots, a _{n} \in A$, 
 $ b_2, \dots b_n \in B$,
 $c_1, c^*_2, c_2, c^*_3, c_3, \dots, \allowbreak 
 c _{n-1}, c^*_n, c_n \in C$ such that

\begin{equation}\labbel{heq}     
\begin{aligned}  
a_2 &\leq_{\mathbf A} a_1 +_{\mathbf A}  c_1
 \text{   and  } c_1 \leq _{\mathbf B} b_1  
\qquad\qquad\qquad
\\
b_2 &\leq_{\mathbf B} c^*_2 +_{\mathbf B} b_1
 \text{   and  } c^*_2 \leq _{\mathbf A} a_2 
\\
a_3 &\leq_{\mathbf A} a_2 +_{\mathbf A} c_2 
\text{   and  } c_2 \leq _{\mathbf B} b_2  
\\
b_3 &\leq_{\mathbf B} c^*_3 +_{\mathbf B} b_2
 \text{   and  } c^*_3 \leq _{\mathbf A} a_3 
\\
& \dots 
\\
a_n &\leq_{\mathbf A} a_{n-1} +_{\mathbf A} c_{n-1}
 \text{   and  }  c_{n-1} \leq _{\mathbf B} b_{n-1}  
\\
b_n &\leq_{\mathbf B} c^*_n +_{\mathbf B} b_{n-1} 
\text{   and  } c^*_n \leq _{\mathbf A} a_n 
\\
a &\leq_{\mathbf A} a_n + c_n \text{   and  } c_n \leq _{\mathbf B} b_n  
\end{aligned} 
  \end{equation}
  \end{enumerate} 
and also the symmetrical conclusion holds 
(with possibly a distinct $n$) for every 
  $b, b_1 \in B $ and $a_1 \in A $ such that 
 $b \leq _{\mathbf D} b_1 +_{\mathbf D} a_1 $.

The definition is intended in the sense that if $n=1$
we get the $1$-step hyperamalgamation property.  
\end{definition}

Again, if semilattices in $\mathcal S$ have a $0$
interpreted as a constant, 
then hyperamalgamation implies superamalgamation.
Indeed, if $a \leq_{\mathbf D} b_1$ take $a_1=0$ in (c) above. 
Then the first line in \eqref{heq} reads 
$a_2 \leq c_1 \leq b_1  $,
thus the second lines gives $b_2 \leq b_1$
and the third line gives $a_3 \leq c_1 + c_2 \leq b_1$.
Iterating, we get $a \leq c_1 + c_2 + \dots +  c_n \leq b_1$,
thus $c_1 + c_2 + \dots + c_n$ witnesses superamalgamation.

\begin{theorem} \labbel{celhan}
  \begin{enumerate}  
  \item 
Boolean algebras have the $1$-step hyperamalgamation property.
\item
Heyting algebras satisfy the dual of
the $1$-step hyperamalgamation property.
\item
Lattices and join semilattices have the 
hyperamalgamation property.
  \end{enumerate} 
 \end{theorem}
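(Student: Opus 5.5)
For (1), I would use the standard construction of the Boolean amalgam as a subalgebra of the free product, that is, the Boolean algebra generated by $A\cup B$ in which elements are finite joins of meets $a\cdot b$. Superamalgamation for Boolean algebras is classical: $a\cdot b=0$ in $\mathbf D$ iff there is $c\in C$ with $a\le c$ and $b\le -c$, equivalently $a\le_{\mathbf D} b$ iff some $c\in C$ separates. The key step is then to reduce the hyperamalgamation condition to this. Given $a\le_{\mathbf D} a_1+b_1$, I rewrite it as $a\cdot(-a_1)\le_{\mathbf D} b_1$, using complements in $\mathbf A$ so that $a\cdot(-a_1)\in A$. Superamalgamation gives $c\in C$ with $a\cdot(-a_1)\le_{\mathbf A} c\le_{\mathbf B} b_1$. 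The first inequality is equivalent in $\mathbf A$ to $a\le a_1+c$, which is exactly (a). Part (b) follows by symmetry. So (1) reduces to the superamalgamation property of Boolean algebras, and I would cite it or prove it via the representation $D$ as clopen subsets of the pullback of Stone spaces.

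For (2), I would proceed dually. A Heyting algebra has relative pseudocomplement $\to$, and the dual of the $1$-step property reads: if $a_1\cdot b_1\le_{\mathbf D} a$, then there is $c\in C$ with $c\cdot a_1\le_{\mathbf A} a$ and $b_1\le_{\mathbf B} c$. Rewriting, $b_1\le_{\mathbf D} a_1\to a$ with $a_1\to a\in A$, since embeddings preserve $\to$. The superamalgamation property of Heyting algebras (Maksimova) then yields $c\in C$ with $b_1\le_{\mathbf B} c\le_{\mathbf A} a_1\to a$, and residuation gives $c\cdot a_1\le a$. Symmetrically for the other clause.

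For (3), no complement or residual is available, so I would construct $\mathbf D$ explicitly, and this is the main obstacle. For join semilattices, I would take $D$ to be the set of finitely generated down-set-like objects in $A\cup B$, closed under a closure operator, and embed via principal closures. Concretely, I would define a closure operator on subsets of $A\cup B$ by iterating two steps. The first closes downward inside $A$ and inside $B$ under joins within each algebra. The second transfers an element of $C$ from one side to the other. Then $D$ is the set of closed sets generated by finite subsets, ordered by inclusion, with join being the closure of the union. $A$ and $B$ embed via $x\mapsto$ the closure of $\{x\}$. The inequality $a\le_{\mathbf D} a_1+b_1$ means that $a$ lies in the closure of $\{a_1,b_1\}$. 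Analyzing the closure as an alternating chain of $A$-steps and $B$-steps, each passing through $C$, produces exactly the sequences $a_i,b_i,c_i,c_i^*$ of \eqref{heq}. The required care is to show that the closure of $\{a_1,b_1\}$ can be computed by these alternating stages, each stage tracking the largest $A$-part $a_i$ and $B$-part $b_i$ reached so far, and that injectivity holds, which uses $C=A\cap B$ and an instance with $n$ steps. For lattices, I would embed $\mathbf A$ and $\mathbf B$ in lattices where meets are also handled, for example by using the known lattice amalgam (free product amalgamated over $C$, in Grätzer's description). Alternatively, I would observe that the same join-closure analysis works once $\mathbf D$ is the lattice of closed sets, provided meets in $A$ and $B$ are preserved, which holds because intersections of principal closed sets are principal. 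Verifying this preservation of meets is where I expect the most technical work.
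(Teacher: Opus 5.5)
Your parts (1) and (2) are exactly the paper's argument: rewrite the inequality using $a\cdot a_1'$, respectively $a_1\to a$, and apply superamalgamation.

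For (3) you take a genuinely different route.

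\emph{The paper's route.} It builds the pushout $\mathbf A\times\mathbf B/\Theta$, with $\Theta$ generated by the pairs $((c,0),(0,c))$. It observes that $\Theta$ is the symmetric--transitive closure of the reflexive compatible relation $(a,b+c)\mathrel{\Theta'}(a+c,b)$, so a $\Theta$-chain from $(a_1,b_1)$ to $(a+a_1,b_1)$ can be read off line by line as \eqref{heq}. Lattices are then handled separately. J\'onsson's argument (Lemma~\ref{lemla}) shows that meets of $\mathbf A,\mathbf B$ survive in a superamalgamating join semilattice generated by $A\cup B$. The Dedekind--MacNeille completion then yields a lattice amalgam without changing the relevant joins.

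\emph{Your route.} You present the amalgam as a closure system on $A\cup B$. The closed sets are the $X$ such that $X\cap A$ and $X\cap B$ are ideals (down-sets closed under $+$, possibly empty), and $\mathrm{cl}\{a\}=\mathord{\downarrow}_{\mathbf A}a\cup\mathord{\downarrow}_{\mathbf B}\{c\in C: c\le_{\mathbf A}a\}$. This makes the embeddings and $A\cap B=C$ immediate; with a $0$, your $\mathbf D$ is in fact isomorphic to the paper's pushout. Your second lattice option is the real gain. In the complete lattice of all closed sets, $\mathrm{cl}\{a\}\cap\mathrm{cl}\{a'\}=\mathrm{cl}\{a\cdot a'\}$ follows at once from this formula: $y\le_{\mathbf B}c\le_{\mathbf A}a$ and $y\le_{\mathbf B}c'\le_{\mathbf A}a'$ give $y\le_{\mathbf B}c\cdot c'$ with $c\cdot c'\in C$ below $a\cdot a'$. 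So meet preservation is a two-line check, not the hard part, and Lemma~\ref{lemla} and the completion become unnecessary. Drop the first option (the amalgamated free product): you give no argument that its order satisfies (c).

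Two points need tightening.

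First, the stages of the closure have no largest parts in general. After one round the $A$-part is $\{x\in A: x\le_{\mathbf A}a_1+c \text{ for some } c\in C,\ c\le_{\mathbf B}b_1\}$, a directed union of principal ideals, so you cannot literally track ``the largest $a_i$ and $b_i$''. A clean replacement is as follows. Let $X$ consist of all $x\in A$ with $x\le_{\mathbf A}a_n+c_n$ and $c_n\le_{\mathbf B}b_n$ for some chain as in \eqref{heq}, together with all $y\in B$ with $y\le_{\mathbf B}b_n$ for some chain. Then:
\begin{enumerate}
\item $X$ contains $a_1,b_1$ and is down-closed on both sides.
\item $X$ is closed under $+$ on both sides. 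First pad the shorter chain via $a_{n+1}=a_n+c_n$, $c^*_{n+1}=c_n$, $b_{n+1}=b_n$; then join two chains of equal length componentwise.
\item $X$ moves elements of $C$ across. In one direction extend by $a_{n+1}=a_n+c_n$, $c^*_{n+1}=c$, $b_{n+1}=b_n+c$; in the other take $c_n=c$.
\end{enumerate}
Hence $X$ is closed, so $\mathrm{cl}\{a_1,b_1\}\subseteq X$, which is exactly (c).

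Second, you never mention a least element, yet (c) always demands some $c_1\in C$ with $c_1\le_{\mathbf B}b_1$. For $C=\{c\}$, $A=\{c<a_1\}$, $B=\{b_1<c\}$ and $a=a_1$, (c) fails in every amalgam. So, as the paper does, you must adjoin a $0$ to $\mathbf A,\mathbf B,\mathbf C$ and allow it among the $c_i$. This $0$ is also what puts $a_1$ into $X$ (take $c_1=0$), and what you pad with when no $c_n$ is at hand.
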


\begin{proof}
(1)  Boolean algebras have the superamalgamation property,
as proved, for example, in \cite{GM}; 
see \cite[Theorem 2.4]{sapimpap} for details
and for an alternative proof. We  use  superamalgamation 
in order to prove the $1$-step hyperamalgamation property.
If $\mathbf A$, $\mathbf  B$ and $\mathbf  C$  
 are to be hyperamalgamated, take $\mathbf  D$ a
superamalgamating model.
Then $a \leq_{\mathbf D} a_1 +_{\mathbf D} b$ in $\mathbf  D$ 
means
$a  a_1'  \leq_{\mathbf D} b$, where $'$
denotes complementation,  hence
$a  a_1'  \leq_{\mathbf A} c \leq_{\mathbf B} b$, for some $c \in C$,
by superamalgamation.
Thus 
$a \leq_{\mathbf A} a_1 +_{\mathbf A} c$ and
$c \leq_{\mathbf b} b$.

(2) is given by a dual argument, since 
in Heyting algebras
$a \geq_{\mathbf D} a_1 b$ is equivalent to 
$ a_1 \rightarrow  a \geq_{\mathbf D} b$.
  
(3) We expect that the result is
known, in some form or another. We 
present full details since we are not aware of a reference.

We first prove the result for join semilattices.
So let $\mathbf A$, $\mathbf  B$ and $\mathbf  C$ 
be as in the hypotheses of the hyperamalgamation property.
It is no loss of generality to assume that 
$\mathbf A$, $\mathbf  B$ and $\mathbf  C$  have a minimal element
$0$, since otherwise we can add such an element.
The free product of $\mathbf A$ and $\mathbf  B$  
is $\mathbf A \times \mathbf  B$, with the canonical morphism
from $\mathbf A$ to $\mathbf A \times \mathbf  B$ sending
$a$ to $(a,0)$, and symmetrically for $\mathbf  B$.
Hence the pushout $\mathbf  D$ of  $\mathbf A$ and $\mathbf  B$
over  $\mathbf  C$  is the quotient of $\mathbf A \times \mathbf  B$
under the smallest congruence $\Theta$ identifying
$(c,0)$ and $(0,c)$, for every $c \in C$.  
One sees that $\Theta$ is the smallest symmetric and
transitive relation containing the relation $\Theta'$
such that any element of the form $(a,b+c)$
is $\Theta'$ related with  $(a+c,b)$, for $a \in A$,
$b \in B$ and $c \in C$. Just observe that $\Theta'$  
is reflexive, compatible and relates $(0,c)$ and $(c,0)$.
Let $[a,b]$ denote the $\Theta$-equivalence class of $(a,b)$. 
  
We claim that $\mathbf  D$ hyperamalgamates
$\mathbf A$ and $\mathbf  B$ over $\mathbf  C$. 
Suppose that
$a, a_1 \in A $, $b_1 \in B $ and 
 $a \leq _{\mathbf D} a_1 +_{\mathbf D} b_1 $, as given by
 the hypotheses of the  hyperamalgamation property.
By the definition of $\mathbf  D$, the above inequality
reads $[a,0] \leq_{\mathbf D} [a_1, b_1]$, namely,
$[a_1, b_1] = [a,0] + [a_1,b_1]   $ in $\mathbf  D$, i.e.,
$ [a_1, b_1] = [a+a_1,b_1] $ in $\mathbf  D$, that is,
$(a_1, b_1) \Thetarel (a+a_1,b_1) $ in   $\mathbf A \times \mathbf  B$, where
the notation $ (a_1, b_1) \Thetarel (a+a_1,b_1)$ means that $  (a_1, b_1)$
 and $(a+a_1,b_1) $ are $\Theta$-related.
By the considerations in the previous paragraph,
 this means that there exists a sequence
\begin{equation}\labbel{lungh}     
  (a_1, b_1) \Thetarel'  (a_2, b^*_1) \Thetarel' _\smallsmile 
(a^*_2, b_2) \Thetarel' (a_3, b^*_2) \Thetarel' _\smallsmile
  \dots (a+a_1,b_1)
\end{equation}
where $_\smallsmile$ denotes converse.
By definition, the relation $ (a_1, b_1) \Thetarel'  (a_2, b^*_1) $
can be written as $ (a_1, b^*_1+c_1) \Thetarel'  (a_1+c_1, b^*_1) $,
for some $c_1 \in C$ with $b_1=  b^*_1+c_1 $ and
$a_2=a_1+c_1$, in particular, $a_2 \leq a_1+c_1$
and $c_1 \leq b_1$, the first line  of \eqref{heq}. 
 Note that we also have $ b^*_1 \leq  b_1$. 
Again by definition, the relation $(a_2, b^*_1) \Thetarel' _\smallsmile 
(a^*_2, b_2)$ means $(a^*_2+c^*_2, b^*_1) \Thetarel' _\smallsmile 
(a^*_2, b^*_1 + c^*_2)$, for some $c^*_2 \in C$ and
for $a_2 = a^*_2+c^*_2$ and $b_2= b^*_1 + c^*_2$,
thus $c^*_2 \leq a_2$ and  $b_2 \leq c^*_2 + b^*_1
\leq c^*_2 + b_1$, the second line of \eqref{heq}, and also
$a^*_2 \leq a_2$.

Going on, $ (a^*_2, b_2) \Thetarel'  (a_3, b^*_2) $
means $ (a^*_2, b^*_2+c_2) \Thetarel'  (a^*_2+c_2, b^*_2) $,
for some $c_2 \in C$ with $b_2=  b^*_2+c_2 $ and
$a_3=a^*_2+c_2$, from which, as above, we get the third line
in \eqref{heq}. At the end,  we reach $c_n \leq b_n$
and  $a+ a_1 \leq a_n+c_n$, 
in particular $a  \leq a_n+c_n$, the last line of \eqref{heq}.

Having proved the result for semilattices,
hyperamalgamation for lattices is immediate from
the case for semilattices and the next lemma, since
the definition of the hyperamalgamation property involves
only $\leq$ and the semilattice operation, and since we have showed that
hyperamalgamation implies superamalgamation 
(under the assumption of the existence of a $0$, but this
can always be accomplished, if necessary, by adding some new $0$ 
to $\mathbf A$, $\mathbf  B$ and $\mathbf  C$).
\end{proof}

\begin{lemma} \labbel{lemla}
Under the conventions  in 
Remark \ref{cuia}, suppose that $\mathbf A$, $\mathbf  B$, $\mathbf  C$
are lattices and the join semilattice $\mathbf  D^-$ superamalgamates
the join semilattice reducts of  $\mathbf A$, $\mathbf  B$, $\mathbf  C$.
Without loss of generality,
assume that $\mathbf  D^-$ is generated by $A \cup B$,
as a semilattice.

Then meets in $\mathbf A$, respectively, in $\mathbf  B$ are preserved
in $\mathbf  D^-$ (as a partially ordered set).
In particular, if $\mathbf  E$ is a lattice whose
semilattice reduct extends
  $\mathbf  D^-$ preserving
existing meets, then $\mathbf  E$ superamalgamates
$\mathbf A$ and $\mathbf  B$ over $\mathbf  C$.
For example, we can take $\mathbf  E$  the Dedekind-McNeille completion 
of $\mathbf  D^-$.
 \end{lemma}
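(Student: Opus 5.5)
We must show: if $a', a'' \in A$ with meet $m = a'a''$ in $\mathbf A$, then $m$ is the greatest lower bound of $\{a',a''\}$ in $\mathbf D^-$; symmetrically for $\mathbf B$. The plan is to exploit the generation assumption. Every element $d \in D^-$ is a finite join of elements of $A \cup B$, and since $A$ and $B$ are closed under $+$, we can write $d = \alpha + \beta$, $d=\alpha$, or $d=\beta$, with $\alpha \in A$ and $\beta \in B$. Adding a $0$ if necessary, which is harmless since superamalgamation is preserved by adjoining a common new bottom, we may assume $d = \alpha + \beta$ in every case. Suppose $d \leq a'$ and $d \leq a''$ in $\mathbf D^-$. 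Then $\alpha \leq a', a''$ in $A$, since $\mathbf A \subseteq \mathbf D^-$ is an order embedding, and therefore $\alpha \leq m$. Moreover $\beta \leq_{\mathbf D} a'$ and $\beta \leq_{\mathbf D} a''$, so superamalgamation (b) yields $c', c'' \in C$ with $\beta \leq_{\mathbf B} c'$, $c' \leq_{\mathbf A} a'$, $\beta \leq_{\mathbf B} c''$ and $c'' \leq_{\mathbf A} a''$.

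The key step is now to use that $\mathbf C$ is a sublattice of both $\mathbf A$ and $\mathbf B$, so that $c = c'c''$ is the same element computed in $\mathbf A$, $\mathbf B$ or $\mathbf C$. In $\mathbf B$ we get $\beta \leq c'c'' = c$, and in $\mathbf A$ we get $c = c'c'' \leq a'a'' = m$. Hence $d = \alpha + \beta \leq m + c = m$ in $\mathbf D^-$. Thus $m$ is the infimum of $a',a''$ in $\mathbf D^-$. The argument for $\mathbf B$ is symmetric.

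For the second part, let $\mathbf E$ be a lattice whose join reduct extends $\mathbf D^-$ and preserves existing meets. Then the inclusions $\mathbf A, \mathbf B \subseteq \mathbf E$ preserve joins (as semilattice embeddings) and preserve meets (by the first part together with the hypothesis on $\mathbf E$). So they are lattice embeddings, and they agree on $\mathbf C$. Since the order of $\mathbf E$ restricted to $D^-$ is that of $\mathbf D^-$, the superamalgamation conditions (a) and (b) transfer verbatim from $\mathbf D^-$ to $\mathbf E$. The Dedekind--MacNeille completion is a complete lattice into which $\mathbf D^-$ embeds preserving all existing joins and meets, so it qualifies as such an $\mathbf E$.

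The main obstacle is the decomposition $d=\alpha+\beta$ and the handling of the absent-$0$ cases. If we avoid adjoining $0$, we split into the three cases; the cases $d=\alpha$ and $d=\beta$ are the same argument with one summand dropped. This case split is the safest route, and I would do it that way rather than adjoining a bottom.
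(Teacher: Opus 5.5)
Your proposal is correct and matches the paper's proof. You write $d=\alpha+\beta$, bound $\alpha$ by the meet in $\mathbf A$, apply superamalgamation (b) twice to get $c',c''\in C$ above $\beta$, and use that $c'c''$ (computed in $\mathbf C$) lies below $a'a''$ in $\mathbf A$; your separate treatment of the cases $d\in A$ or $d\in B$, when no $0$ is available, is a point of care the paper passes over by simply writing $d=a+_{\mathbf D^-}b$.
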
 

\begin{proof}
The argument appears at the end of the proof
of \cite[Theorem 3.5]{J}. We give explicit details for the reader's convenience.
Suppose that  $a_1,a_2 \in A$ have meet $a_3$ in $\mathbf A$ 
and suppose that   $d \in D$ is such that  $d \leq_{\mathbf  D^-} a_1,a_2$.
Since  $\mathbf  D^-$ is generated by $A \cup B$
as a semilattice, $d= a +_{\mathbf  D^-} b$, for some 
not necessarily unique
$a \in A$ and $b \in B$.  
Thus $a \leq_{\mathbf  A} a_1,a_2$, hence 
$a \leq_{\mathbf  A} a_3 $, since $a_3$ is the meet of  $  a_1,a_2$
in $\mathbf A$. Moreover, $b \leq_{\mathbf  D^-} a_1$,
hence, by superamalgamation, there is $c_1 \in C$ with
$b \leq_{\mathbf  B} c_1 \leq_{\mathbf  A} a_1$;
symmetrically, there is $c_2 \in C$ with
$b \leq_{\mathbf  B} c_2 \leq_{\mathbf  A} a_2$.
Hence  $b \leq_{\mathbf  B} c_1 \cdot_{\mathbf  C} c_2
\leq_{\mathbf  A} a_1 \cdot_{\mathbf  A} a_2 = a_3$.
Since all embeddings are order-embeddings, in particular,
both $\mathbf A$ and $\mathbf  B$ order-embed into $\mathbf  D^-$,
we get $d= a +_{\mathbf  D^-} b \leq _{\mathbf  D^-} a_3 $.
This shows that $a_3$ is the meet of  
$a_1 $ and $ a_2 $ also in $\mathbf  D^-$.

Meets in $\mathbf  B$ are preserved by the symmetric
argument.
The remaining statements are straightforward.
Recall that the Dedekind-McNeille completion preserves
all existing meets (and joins).
 \end{proof}

\begin{example} \labbel{no1}   
We show that lattices and semilattices do not have
the $1$-step hyperamalgamation property.
Let $\mathbf A$ and $\mathbf  B$ be the lattices pictured below, with
the common sublattice $\mathbf  C$, the chain with elements
$0 < c_1 < c^*_2 <  c_2 < c^*_3 \dots$.
\begin{equation*}
\xymatrix{
&&
\\
a_4  \ar@{--}[u] &
c_4 \ar@{--}[u]  
\\
& c^*_4  \ar@{-}[lu] \ar@{-}[u] 
\\
a_3  \ar@{-}[uu] &
c_3 \ar@{-}[u]   
\\
& c^*_3  \ar@{-}[lu] \ar@{-}[u] 
\\
a_2  \ar@{-}[uu] &
c_2 \ar@{-}[u]  
\\ 
& c^*_2 \ar@{-}[lu] \ar@{-}[u] 
\\
a_1  \ar@{-}[uu] & 
c_1 \ar@{-}[u]  
\\ 
& 0 \ar@{-}[lu] \ar@{-}[u]   &
\\
\mathbf A & \mathbf  C
}
\xymatrix{
&&
\\
 c_4 \ar@{--}[ru] \ar@{--}[u]   
\\
 c^*_4   \ar@{-}[u]  & b_3 \ar@{--}[uu]  
\\
 c_3 \ar@{-}[ru] \ar@{-}[u]   
\\
 c^*_3   \ar@{-}[u] & b_2 \ar@{-}[uu]
\\
c_2 \ar@{-}[ru] \ar@{-}[u]   
\\ 
c^*_2  \ar@{-}[u] & b_1 \ar@{-}[uu]
\\
 c_1 \ar@{-}[ru] \ar@{-}[u]  
\\ 
 0  \ar@{-}[u]
\\
\mathbf  C& \mathbf  B
}
\end{equation*}

In any amalgamating semilattice $\mathbf  D$ we have
$a_2 = a_1+ _{\mathbf  A} c_1 \leq_{\mathbf  D} a_1+_{\mathbf  D}b_1  $,
 since $c_1 \leq_{\mathbf  B} b_1$.
Since $c^*_2 \leq_{\mathbf  A} a_2 \leq_{\mathbf  D} a_1+b_1$, 
we get    $ b_2 =  c^*_2 +_{\mathbf  B} b_1 \leq_{\mathbf  D} 
a_1+_{\mathbf  D}b_1 $.
The $1$-step hyperamalgamation property
(case (b) with $b_2$ in place of $b$)
should provide some $c \in C$ such that 
$b_2 \leq_{\mathbf  B} b_1+_{\mathbf  B}c  $
 and $c \leq_{\mathbf  A} a_1$,
but the only element of $C$ less than $a_1$ is $0$,
hence   the $1$-step hyperamalgamation property fails.

Of course, we used only the bottom part of the above lattices,
in order to provide a counterexample to  the 
$1$-step hyperamalgamation property.
But the example shows, more generally, that there is
no limit to the length of the sequence in \eqref{heq}.
Indeed, iterating the argument,
 from $c_2 \leq_{\mathbf  B} b_2$ we get 
    $ a_3 = a_2 +_{\mathbf  A} c_2 \leq_{\mathbf  D} 
a_1+_{\mathbf  D}b_1 $.
 Iterating again, we get that  in any amalgamating semilattice
$a_1+_{\mathbf  D}b_1$ 
 is greater than all the elements pictured in the diagram.
\end{example}

\section{Preservation of 
hyperamalgamation in expansions} \labbel{hexpa}

\begin{definition} \labbel{operat}
Assume that $\mathbf E$ is a semilattice structure
and $K: E \to E$ is a unary operation
(either for the language of
$\mathbf E$, or not in the language of $\mathbf E$).

We will consider the
following properties
of $K$.  
  \begin{enumerate}[({D}1)]   
 \item 
$K$ is \emph{additive}, that is,   $K(a+b)=Ka+Kb$, for all
$a, b \in E$,
\item
$K$ is additive and extensive,
where \emph{extensive} means that  $Ka  \geq a$, for all
$a \in E$, and
\item
$K$ is additive and idempotent,
where \emph{idempotent} means that  $KKa = Ka$, for all
$a \in E$, and
\item
$K$ is
an \emph{additive closure operation}, that is, $K$ 
is additive, extensive and idempotent. 
  \end{enumerate} 

Some authors include additivity in the definition of a closure
operation, but, following standard use
in order theory, we will always make explicit mention 
of additivity. Note that an additive operation is necessarily
\emph{isotone}, namely, $a \leq b$ implies $Ka \leq Kb$.  

Now assume further that  $D \subseteq E$
 and $G: D \to E$ is a function, in other words,
 $G$ is a \emph{partial} 
function on $E$.
We will consider the following properties
of $G$.  
  \begin{enumerate}[({D*}1)] 
  \item
$G$ is \emph{suitable for additivity}
 if 
\begin{equation}\labbel{isoa}\tag{Eq. D*1}
d \leq d_1 + \dots + d_n  \text{ implies } Gd \leq Gd_1 + \dots + Gd_n
\end{equation}
for every
$n \in \mathbb N^+$  and  all $d,d_1, \dots, d_n \in D$, 
   \item   
$G$  is  \emph{suitable for additivity and extensiveness} if 
(D*1) above holds and furthermore $Gd \geq d$, for all
$d \in D$,
\item 
$G$ is \emph{suitable for additivity and idempotency} 
if,  for every
$i, n \in \mathbb N^+$ with $i \leq n$,  and  all $d,d_1, \dots, d_n \in D$,

\begin{align}
\labbel{isoidea}\tag{Eq. D*3}
&\begin{aligned}    
&d \leq d_1 + \dots + d_i + Gd_{i+1}+ \dots + Gd_n 
 \text{ implies }
\\ 
& \qquad \qquad \qquad Gd \leq Gd_1 + \dots +
 Gd_i + Gd_{i+1}+ \dots Gd_n \text{ and }
 \end{aligned}
\\
 \labbel{isoideabis}\tag{Eq. D*3$'$}
&\begin{aligned}    
&Gd \leq d_1 + \dots + d_i + Gd_{i+1}+ \dots + Gd_n 
 \text{ implies }
\\ 
& \qquad \qquad \qquad Gd \leq Gd_1 + \dots +
+ Gd_i + Gd_{i+1}+ \dots Gd_n  
 \end{aligned}
\end{align} 
\item
 $G$ is  \emph{suitable for additive closure}   if 
$G$ is extensive and
\begin{equation}\labbel{cloa}\tag{Eq. D*4}
d \leq Gd_1 + \dots + Gd_n  \text{ implies } Gd \leq Gd_1 + \dots + Gd_n
\end{equation} 
for every
$n \in \mathbb N^+$  and  all $d,d_1, \dots, d_n \in D$.
\end{enumerate} 
 \end{definition}   

Note that if $G$ is total on $E$, that is, $D=E$,
then, for each $n=1, \dots, 4$, Condition (D*n)
is equivalent to  Condition (Dn), taking
$K=G$, of course. 

The next proposition  shows that ``suitability''  
is preserved by hyperamalgamating expansions,
taking $D= A \cup B$. It follows that  if, in  some class $\mathcal S$ 
with hyperamalgamation,  ``suitability'' really provides
 the possibility of extending $G$ to a total operator $K$
satisfying the corresponding properties, then hyperamalgamation 
is preserved when expanding structures in $\mathcal S$ 
by adding an operator satisfying such properties.

\begin{proposition} \labbel{suithyp} 
If  in the assumptions of the hyperamalgamation property 
 $\mathbf A$, $\mathbf  B$ and  $\mathbf  C$ 
are semilattice structures with some unary operation $K$,
let $\mathbf A^-$, $\mathbf  B^-$, $\mathbf  C^-$ 
be the corresponding reducts obtained by removing $K$.

 Suppose that  $\mathbf E^-$ hyperamalgamates 
$\mathbf A^-$ and  $\mathbf  B^-$ over $\mathbf  C^-$,
let $D=A \cup B$ and define $G: D \to E$ by    
\begin{equation}\labbel{kd}
Gd =\begin{cases} 
K_{\mathbf A} d &    \text{if  $ d \in A  $},
\\ 
K_{\mathbf B} d &    \text{if  $ d \in B  $}
 \end{cases} 
\end{equation}   
(note that $K_{\mathbf A} $ and 
$K_{\mathbf B} $ agree on $C=A \cap B$, 
 by the assumptions in the hypothesis of 
hyperamalgamation).

Then we have: for each $n=1, \dots, 4$, if $K$ 
satisfies  Condition (Dn) in $\mathbf A$, $\mathbf  B$, $\mathbf  C$,
then $G$ satisfies Condition (D*n).
\end{proposition}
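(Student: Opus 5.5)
The plan is to reduce every instance of the ``suitability'' conditions to an instance of clause (c) of Definition \ref{hypdef}, and then push $K$ through the resulting chain \eqref{heq} by induction along the chain. By the symmetry between the two clauses of hyperamalgamation it is enough to treat $d \in A$; the case $d \in B$ is obtained by exchanging the roles of $\mathbf A$ and $\mathbf B$.

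\emph{Condition (D*1).} Suppose $d \leq_{\mathbf E} d_1 + \dots + d_n$ with $d \in A$. I split the indices according to whether $d_i \in A$ or $d_i \in B \setminus A$. Let $a_1$ be the $\mathbf A$-join of the $d_i$ lying in $A$, and $b_1$ the $\mathbf B$-join of the others. Since $\mathbf A,\mathbf B \subseteq \mathbf E$ are semilattice embeddings, these joins agree with the joins in $\mathbf E$. By additivity of $K$ in $\mathbf A$ and in $\mathbf B$ we get $K a_1 = \sum K d_i$ over the first group and $K b_1 = \sum K d_i$ over the second. If the second group is empty, then $d \leq_{\mathbf A} a_1$, and $Kd \leq Ka_1$ follows from isotony.

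Otherwise, set $S = Ka_1 + Kb_1$ in $\mathbf E$. Apply (c) to $d \leq a_1 + b_1$ to get the chain \eqref{heq}. I then prove by induction on $j$ that $Ka_j \leq_{\mathbf E} S$ and $Kb_j \leq_{\mathbf E} S$.
\begin{itemize}
\item From $a_{j+1} \leq_{\mathbf A} a_j + c_j$ we get $Ka_{j+1} \leq Ka_j + Kc_j$, by isotony and additivity in $\mathbf A$.
\item From $c_j \leq_{\mathbf B} b_j$ we get $Kc_j \leq Kb_j \leq S$.
\item Symmetrically, $b_{j+1} \leq_{\mathbf B} c^*_{j+1} + b_j$ and $c^*_{j+1} \leq_{\mathbf A} a_{j+1}$ give $Kb_{j+1} \leq Ka_{j+1} + Kb_j \leq S$.
\end{itemize}
Here $K_{\mathbf A}$ and $K_{\mathbf B}$ agree on the elements $c_j, c^*_j \in C$, so $Kc_j$ and $Kc^*_j$ are unambiguous. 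The last line of \eqref{heq} then gives $Kd \leq Ka_n + Kc_n \leq S$, as required.

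\emph{The degenerate case.} The point I expect to be the main obstacle is when all the $d_i$ lie in $B \setminus A$ while $d \in A$, so that there is no $a_1$ and we only know $d \leq_{\mathbf E} b_1$. Clause (c) as stated always needs an $a_1$. My first attempt is to adjoin a new bottom $0$ to $\mathbf A,\mathbf B,\mathbf C,\mathbf E$ with $K0 = 0$. This preserves additivity, and it lets us take $a_1 = 0$. As remarked after Definition \ref{hypdef}, the chain then collapses to a superamalgamation witness $d \leq_{\mathbf A} c \leq_{\mathbf B} b_1$ with $c \in C$, whence $Kd \leq Kc \leq Kb_1$. The care needed is in checking that the hyperamalgamating $\mathbf E^-$ still yields such a chain after $0$ is adjoined. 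If that fails, I would instead try to extract a separating $c \in C$ directly from an instance of (c) in which some auxiliary element of $A$ plays the role of $a_1$.

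\emph{Conditions (D*2)--(D*4).} For (D*2), extensiveness of $G$ is immediate, since $Gd = Kd \geq d$ in $\mathbf A$ or $\mathbf B$, and this inequality is preserved in $\mathbf E$. For (D*3) and (D*4), I run the same induction with $a_1$, $b_1$ now being joins of a mixture of $d_i$'s and $Kd_i$'s. Idempotency gives $K(Kd_i) = Kd_i$, so $Ka_1$ and $Kb_1$ are again exactly the corresponding sums of $Gd_i$'s, and the bound $Kd \leq \sum Gd_i$ follows. For (Eq.~D*3$'$) and \eqref{cloa}, I apply the argument to the element $Kd \in A$ in place of $d$, and use $KKd = Kd$ at the end; in (D*4) extensiveness is needed to rewrite the hypothesis, as $d \leq Kd$.
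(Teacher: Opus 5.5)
\textbf{Overall.} Your treatment of (D*1) is the paper's argument, and so is your reduction of (D*2)--(D*4) to (D*1) through idempotency. You split the $d_i$ into an $\mathbf A$-part $a_1$ and a $\mathbf B$-part $b_1$, apply Clause (c) to $d \leq a_1 + b_1$, and push $K$ through the chain. Your forward induction ($Ka_j, Kb_j \leq S$) is the paper's backward unwinding, from $K_{\mathbf A} d \leq K_{\mathbf A} a_n + K_{\mathbf B} b_n$ down to $K_{\mathbf A} a_1 + K_{\mathbf B} b_1$, read in the opposite direction. A minor point: for (Eq.~D*4) you need neither $Kd$ nor extensiveness. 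The hypothesis $d \leq Gd_1 + \dots + Gd_n$ is already an instance of (D*1) with the elements $Gd_k \in D$, and $GGd_k = Gd_k$ finishes. Extensiveness is only used for the first clause of (D*4).

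\textbf{The open case.} What your proposal leaves unfinished is the degenerate case you flag: $d \in A$ with all $d_i \in B \setminus A$. The paper passes over it silently, tacitly taking both groups nonempty. Neither of your two routes is carried out. Adjoining a new $0$ does not help by itself, because the instances of Clause (c) with $a_1 = 0$ amount exactly to superamalgamation of $\mathbf E^-$, which is what needs proof.

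The missing observation is that hyperamalgamation forces every element of $B$ to lie above some element of $C$. Apply Clause (c) to the trivial inequality $d \leq_{\mathbf E} d + b_1$. The first line of the resulting chain (the only line if the chain has length $1$) provides some $c \in C$ with $c \leq_{\mathbf B} b_1$. Now $d \leq_{\mathbf E} b_1 = c + b_1$ with $c \in A$. Your induction with $a_1 := c$ then gives $Kd \leq Kc + Kb_1 = Kb_1$, using $K_{\mathbf A} c = K_{\mathbf B} c \leq K_{\mathbf B} b_1$. Since $K_{\mathbf B}$ is additive, $Kb_1 = Gd_1 + \dots + Gd_n$. The symmetric case is handled the same way.

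This step also rescues your adjoin-$0$ route. Running the collapse argument that follows Definition~\ref{hypdef}, with $c$ in place of $0$, shows that $\mathbf E^-$ superamalgamates. With this line added your proof is complete, and slightly more careful than the paper's.
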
 

 \begin{proof} 
First consider the case $n=1$.
Suppose that, say, $d \in A$, the case 
$d \in B$  is treated in the symmetrical way.

Each $d_1$, \dots,  $d_n$
 belongs to $A$ or to $B$ (possibly to both) so,  by rearranging
the elements, we may assume that  
$d_1, \dots,  d_j \in A$ and 
$d_{j+1}, \dots,  d_n \in B$.
Letting $a_1= d_1 + _ {\mathbf A} \dots + _ {\mathbf A}  d_j$ 
and
 $b_1= d_{j+1} + _ {\mathbf B} \dots + _ {\mathbf B}  d_n$,
we have 
$d \leq a_1 + _ {\mathbf A}   b_1 $,
 Since $\mathbf E$ is hyperamalgamating,
we have elements as in Clause (c) in Definition \ref{hypdef}
(here $d$ is in place of  $a$). 
By additivity of $K _ {\mathbf A} $ and isotony 
of  $K _ {\mathbf B} $ we get 
$K _ {\mathbf A}  a_2 \leq_{\mathbf A} 
K _ {\mathbf A}  a_1 +_{\mathbf A} K _ {\mathbf A}   c_1 $
 and   $   K _ {\mathbf B} c_1 \leq _{\mathbf B} K _ {\mathbf B} b_1  $ 
from the first line in \eqref{heq}. 
Going on this way, we reach
$K _ {\mathbf A}  d \leq_{\mathbf A} 
K _ {\mathbf A}  a_n +_{\mathbf A} K _ {\mathbf A}   c_n $
and  $   K _ {\mathbf B} c_n \leq _{\mathbf B} K _ {\mathbf B} b_n  $
(recall that here we have $d$ in place of $a$).

Now, since $K _ {\mathbf A} c = K _ {\mathbf C} c =
K _ {\mathbf B} c$, for every $c \in C$,
 working in $\mathbf E$,  we can turn back getting 
$K _ {\mathbf A}  d \leq 
K _ {\mathbf A}  a_n + K _ {\mathbf C}   c_n  \leq 
K _ {\mathbf A}  a_n + K _ {\mathbf B}   b_n$.
Then, by the proved analogue of the penultimate line in \eqref{heq}, 
 $K _ {\mathbf B} b_n \leq K _ {\mathbf C} c^*_n +K _ {\mathbf B} b_{n-1} 
\leq  K _{\mathbf A} a_n +K _ {\mathbf B} b_{n-1}$, hence
 $K _ {\mathbf A} d \leq 
 K _{\mathbf A} a_n +K _ {\mathbf B} b_{n-1}$.
Going on in the same way, we reach
 $K _ {\mathbf A} d \leq 
  K _{\mathbf A} a_1 +K _ {\mathbf B} b_{1}$.
Since $K _{\mathbf A}$ and $K _{\mathbf B}$ 
are additive, recalling the definitions of $a_1$ and $b_1$,
we have    
 $K _ {\mathbf A} d \leq 
  K _{\mathbf A} d_1 + \dots + K _{\mathbf A} d_j + 
K _ {\mathbf B} d_{j+1} + \dots + 
K _ {\mathbf B} d_{n}$, namely, by the definition of $G$,
 $G d \leq G d_1 + \dots +  Gd_n$, what we need to show
in  (D*1).

The case $n=2$ is immediate from the case $n=1$, since if 
$K$ is extensive, then $G$ is extensive, as well.

Since  both in  case $n=3$  and in case $n=4$  $K$ is assumed to be
additive, we can use the already proved case $n=1$.
Note also that, under the assumptions in the present
proposition, in such cases,
$D$ is closed under applications of $G$, namely,
$G(d) \in D$, for $d \in D$, because of \eqref{kd}
and  since both $ A$ and $B$
are closed under $K$. 
 So, for example,
to get \eqref{isoideabis}, from 
$Gd \leq d_1 + \dots + d_i + Gd_{i+1}+ \dots + Gd_n $
we get
$GGd \leq Gd_1 + \dots + Gd_i + GGd_{i+1}+ \dots + GGd_n $
from the already proved  case $n=1$.
This means 
$ Gd \leq Gd_1 + \dots +
 Gd_i + Gd_{i+1}+ \dots Gd_n  $,
since, by the assumptions in (D3),
$KK d = Kd$, for $d \in A$  or $d \in B$,
hence also  $GG d =Gd$,
for $d \in D$. 
The other conditions are dealt with 
in a similar way.
\end{proof}

\begin{definition} \labbel{suit} 
Suppose that $\mathcal S$ is a class
of semilattice structures in a language which
contains a unary 
operation symbol $K$ and let $\mathcal S^-$
be the class of reducts of members of $\mathcal S$
obtained by removing $K$.
Fix some $n \in \{ \, 1,2,3,4  \,\}$.

We say that \emph{suitability is effective} for Clause (Dn)
(\emph{relative to} $\mathcal S$ and $\mathcal S'$, when they are 
 not clear from the context) if, for every model $\mathbf E^-$ 
of  $\mathcal S^-$,
every $D \subseteq E$ and every function $G:D \to E$
satisfying Clause (D*n), the model $\mathbf E^-$ can be extended
and then expanded to some model $ \mathbf F$ of $\mathcal S$ 
in such a way that the restriction of $K _ {\mathbf F} $ to $D$ 
is $G$. 
\end{definition}

\begin{theorem} \labbel{pres} 
Under the assumptions in the above definition, 
suppose that 
   \begin{enumerate}  
  \item  
  $\mathcal S^-$ has
the hyperamalgamation property and   
\item
suitability is effective for Clause (Dn).  
 \end{enumerate} 

Then $\mathcal S$ has the hyperamalgamation property,
in particular, the superamalgamation property. 
\end{theorem}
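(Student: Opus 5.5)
Start with $\mathbf C \subseteq \mathbf A$, $\mathbf C \subseteq \mathbf B$ in $\mathcal S$ with $C=A \cap B$, and pass to the reducts $\mathbf A^-$, $\mathbf B^-$, $\mathbf C^-$ in $\mathcal S^-$. By hypothesis (1), some $\mathbf E^- \in \mathcal S^-$ hyperamalgamates them. Put $D=A\cup B$ and define $G\colon D\to E$ by \eqref{kd}. This is well defined because $K_{\mathbf A}$ and $K_{\mathbf B}$ agree with $K_{\mathbf C}$ on $C$.

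Since $K$ satisfies (Dn) in $\mathbf A$, $\mathbf B$, $\mathbf C$ (all of which lie in $\mathcal S$), Proposition \ref{suithyp} shows that $G$ satisfies (D*n). By hypothesis (2), effectiveness of suitability gives a model $\mathbf F \in \mathcal S$ whose reduct $\mathbf F^-$ extends $\mathbf E^-$ and with $K_{\mathbf F}\restriction D = G$.

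We then check that $\mathbf F$ hyperamalgamates $\mathbf A$ and $\mathbf B$ over $\mathbf C$.

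First, $\mathbf A \subseteq \mathbf F$. The reduct $\mathbf A^-$ is a substructure of $\mathbf E^-$, and $\mathbf E^-$ is a substructure of $\mathbf F^-$. For the operation $K$, we have $K_{\mathbf F} a = Ga = K_{\mathbf A} a$ for $a\in A$, so $A$ is closed under $K_{\mathbf F}$ and $K$ is preserved. The same argument gives $\mathbf B\subseteq \mathbf F$.

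Second, condition (c) of Definition \ref{hypdef} transfers from $\mathbf E^-$ to $\mathbf F$. Condition (c) involves only $\leq$ and $+$, evaluated on elements of $A\cup B$. The join $a_1+b_1$ computed in $\mathbf F$ equals the join computed in $\mathbf E^-$, because $\mathbf E^-$ is a subsemilattice of $\mathbf F^-$. The order on $E$ is the one induced from $\mathbf F$, since it is defined by $+$. Hence if $a\leq_{\mathbf F} a_1+_{\mathbf F} b_1$, then $a \leq_{\mathbf E} a_1+_{\mathbf E} b_1$. The chain \eqref{heq} supplied by $\mathbf E^-$ mentions only the orders and joins inside $\mathbf A$, $\mathbf B$, $\mathbf C$, so it witnesses (c) for $\mathbf F$ unchanged. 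The symmetric clause is handled the same way.

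The final claim, that $\mathcal S$ has the superamalgamation property, should follow from the remark after Definition \ref{hypdef} that hyperamalgamation implies superamalgamation. That argument, however, uses a $0$ in the language. Without a $0$, I would repeat the argument using the superamalgamation of $\mathbf E^-$ itself, which transfers to $\mathbf F$ in the same way as (c).

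I expect the main obstacle to be a bookkeeping point: Definition \ref{suit} says $\mathbf E^-$ is ``extended and then expanded''. One must make sure this means $\mathbf E^-$ is a substructure of $\mathbf F^-$ in the full language of $\mathcal S^-$, so that all other operations and relations of $\mathbf A$ and $\mathbf B$ are still preserved. Granting that reading, everything else is direct.
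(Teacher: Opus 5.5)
Your construction of $\mathbf F$ and your check that it hyperamalgamates $\mathbf A$ and $\mathbf B$ over $\mathbf C$ are exactly the paper's proof. You spell out in more detail why clause (c) passes from $\mathbf E^-$ to $\mathbf F$. Your reading of ``extended and then expanded'' as $\mathbf E^-\subseteq\mathbf F^-$ in the full language of $\mathcal S^-$ is the intended one.

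The step that does not work as written is the ``in particular'' clause when there is no $0$. Hypothesis (1) only says that $\mathbf E^-$ hyperamalgamates. So ``the superamalgamation of $\mathbf E^-$ itself'' is not available to you; it is the same question one level down. (The paper's proof is silent here too and relies on the remark after Definition \ref{hypdef}, which uses a $0$.)

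In fact no $0$ is needed. Let $a\in A$ and $b\in B$ with $a\le_{\mathbf F} b$.
\begin{enumerate}
\item Since $a\le_{\mathbf F} a+b$ holds trivially, apply clause (c) with $a_1=a$, $b_1=b$. The resulting chain's first line (the only one if $n=1$) contains some $\bar c\in C$ with $\bar c\le_{\mathbf B} b$. This is the proviso of the remark after Definition \ref{hypdef1}, so that proviso holds automatically.
\item Now $a\le_{\mathbf F}\bar c+b$. Apply (c) again with $a_1=\bar c$, $b_1=b$, and run the telescoping argument of the remark after Definition \ref{hypdef} with $\bar c$ in place of $0$. Writing $c_1,\dots,c_n$ for the unstarred $C$-elements of this second chain, you get $b_k\le_{\mathbf B} b$ for every $k$ and
\[
a\le_{\mathbf A}\bar c+c_1+\dots+c_n\le_{\mathbf B} b ,
\]
with $\bar c+c_1+\dots+c_n\in C$.
\end{enumerate}
The symmetric clause is handled the same way. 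So hyperamalgamation of $\mathbf F$ alone yields superamalgamation, and with this replacement your proof is complete.
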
 

\begin{proof}
Suppose that $\mathbf A$, $\mathbf  B$, $\mathbf  C$
are structures in $\mathcal S$ satisfying the
assumptions of the hyperamalgamation property
and let $\mathbf A^-$, $\mathbf  B^-$, $\mathbf  C^-$
be the corresponding reducts in the language without $K$.  
The hyperamalgamation property for $\mathcal S^-$ 
provides some hyperamalgamating model $\mathbf E^-$.  
If  $D$ and $G$ are defined as in 
Proposition \ref{suithyp}, then the proposition 
implies that $G$ satisfies Condition (D*n). 
By effectiveness of suitability as in Definition \ref{suit}, we get an
extension and expansion $ \mathbf F$ of $\mathbf E^-$
with an operation $K _ {\mathbf F} $
satisfying (Dn)
and extending $G$. In particular, $ \mathbf F \in \mathcal S$.  

Then $ \mathbf F$ hyperamalgamates $\mathbf A$ and $\mathbf  B$
over $\mathbf  C$. Indeed, $ \mathbf F^-$ extends $\mathbf E^-$,
thus $ \mathbf F^-$ still hyperamalgamates $\mathbf A^-$ and $\mathbf  B^-$
over $\mathbf  C^-$, since $\mathbf E^-$ does it.
Moreover, the restrictions of $K _ {\mathbf F} $
to $A$ and $B$ agree with $K _ {\mathbf A} $
and $K _ {\mathbf B} $, because of the definition of $G$
and since $K _ {\mathbf F} $ extends $G$.
 \end{proof}

In view of Theorem \ref{pres},
it is useful to determine conditions under which 
suitability is effective, which is our objective
in the next section.

\section{An extension lemma} \labbel{extsec}

The following lemma generalizes various
results from \cite{E,ecca,sapimpap,MT,Se,Sc}.
As in \cite[Lemma 3.1]{sapimpap},
extensiveness is not necessarily assumed
and  no assumption is made on the subset $D$,
in particular, $D$ 
is  not required, say, to be closed with respect to joins.
This seems
to be the main advantage with respect to previous results.
On the other hand, here working with a complete bounded lattice 
as in \cite{sapimpap} is not enough and  
we need a rather strong form of infinitary distributivity.

Recall that a complete lattice $\mathbf L$  satisfies
the \emph{Meet
Infinite Distributive Identity} (MID)
if $x + \prod _{y \in Y} y= \prod _{y \in Y} (x + y) $,
for every $x \in L$ and $Y \subseteq L$.
The property (JID) is the dual condition.
Of course, from (MID) we obtain the following
\emph{$(2, \infty )$-distributive law}
$\prod _{x \in X} x + \prod _{y \in Y} y
= \prod _{x \in X, y \in Y} (x + y) $.

\begin{lemma} \labbel{lem}
Assume that $\mathbf L$ is a complete bounded lattice
satisfying (MID),
 $D \subseteq L$ and $G: D \to L$ is a function.

Then, for every $n=1, \dots, 4$,  
 $G$ can be extended to an operation $K$ on $\mathbf L$
satisfying  (Dn) in Definition \ref{operat} if and only if
$G$ satisfies
Clause (D*n).
 \end{lemma}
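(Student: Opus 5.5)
The ``only if'' direction is immediate: if $K$ satisfies (Dn) on $L$, then its restriction $G$ satisfies (D*n). Additivity gives isotony, hence $d \leq d_1+\dots+d_n$ implies $Kd \leq K(d_1+\dots+d_n)=Kd_1+\dots+Kd_n$. Extensiveness restricts trivially. For (D*3) and (D*4) one applies $K$ to the hypothesis and uses $KK=K$, and in case (D*4) also extensiveness. So all the work is in the ``if'' direction. There I would define $K$ as the least possible extension, namely a sup-type formula. Writing $\downarrow$ for principal filters is not needed; the natural candidate for (D1) is
\[
Kx=\prod\Bigl\{\,Gd_1+\dots+Gd_n : n\in\mathbb N,\ d_i\in D,\ x\leq d_1+\dots+d_n\,\Bigr\},
\]
with the empty join taken as $0$ and the empty meet as $1$. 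This is the smallest isotone map that is ``subadditive along $D$'' and dominates $G$ in the appropriate sense.

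For (D1), I would first check that $K$ extends $G$. The inequality $Kd\leq Gd$ holds by taking the single cover $d\leq d$. The reverse inequality $Gd\leq Kd$ holds because every cover $d\leq d_1+\dots+d_n$ gives $Gd\leq \sum Gd_i$ by (Eq. D*1). Isotony of $K$ is clear, so $Kx+Ky\leq K(x+y)$. The key step, which I expect to be the main obstacle, is $K(x+y)\leq Kx+Ky$. Here (MID) in its $(2,\infty)$ form is used:
\[
Kx+Ky=\prod_{\text{covers of }x}\ \prod_{\text{covers of }y}\Bigl(\sum Gd_i+\sum Ge_j\Bigr).
\]
Each term on the right is the $G$-sum of the concatenated cover $x+y\leq \sum d_i+\sum e_j$, and is therefore $\geq K(x+y)$. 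This is exactly why the finite covers in the definition must be allowed to be arbitrary and why $D$ need not be join-closed. Degenerate cases, such as no cover existing (value $1$) or $x=0$, need a quick check; with the empty cover, $K0=0$.

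For (D2), I would modify the definition to $Kx=x+\prod\{\sum Gd_i : x\leq\sum d_i\}$, or equivalently use covers of the form $x\leq y+\sum d_i$. Extensiveness is then built in. Additivity again follows from (MID), since $x+y+\prod+\prod=\prod\prod(x+y+\dots)$. Extension of $G$ uses $Gd\geq d$.

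For (D3) and (D4), idempotency has to be built in, and the cover must be allowed to use values $Gd$ as well. For (D4), I would set $Kx=\prod\{\sum Gd_i : x\leq \sum Gd_i\}$, intersected appropriately so that $Kx\geq x$ holds. Then $K$ extends $G$ by (Eq. D*4) and extensiveness, and $KKx=Kx$ holds since any cover of $x$ by $G$-values also covers $\sum Gd_i\geq Kx$. Additivity again comes from concatenating covers via (MID). For (D3), I would use mixed covers $x\leq d_1+\dots+d_i+Gd_{i+1}+\dots+Gd_n$ with value $\sum_{k} Gd_k$. (Eq. D*3) gives extension of $G$. Idempotency follows because the value $\sum Gd_k$ is itself covered by the pure-$G$ cover with the same value, so $KKx\leq Kx$. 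The reverse inequality uses (Eq. D*3$'$) to show $K(Gd)=Gd$, and then additivity. I expect the careful verification of $KKx\geq Kx$ in case (D3), where $K$ is not extensive, to need the most attention.
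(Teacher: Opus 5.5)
Your necessity direction and your cases (D1), (D2), (D4) follow the paper. The paper uses the same infimum formulas, and gets additivity by concatenating covers via the $(2,\infty)$-distributive law. For (D2) the paper simply notes that the (D1) operator is already extensive when $G$ is, since every value $Gd_1+\dots+Gd_n$ dominates $d_1+\dots+d_n\geq x$; your extra summand $x$ is harmless.

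There is one small slip. Do not admit the empty cover: (Eq.~D*1) is only required for $n\in\mathbb N^+$, so it does not give $G0\leq 0$, whereas your definition forces $K0=0$. For example, $D=\{0\}$ with $G0=1$ satisfies (D*1)--(D*4) and extends to the constant operation $1$, but your $K$ does not extend $G$. Use $n\geq 1$, as the paper does.

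The genuine gap is case (D3). Your $K$ there is exactly the paper's auxiliary operator $H$, and it need not be idempotent. So $KKx\geq Kx$ is not merely delicate; it is false in general. Here is a counterexample.
\begin{itemize}
\item Take $\mathbf L=\mathcal P(\{a,b,c,p\})$ and $D=\{d_1,d_2,d_3\}$, with $d_1=\{a,c\}$, $d_2=\{b,c\}$, $d_3=\{p\}$, and $Gd_1=\{a,p\}$, $Gd_2=\{b,p\}$, $Gd_3=\emptyset$.
\item (Eq.~D*3) and (Eq.~D*3$'$) hold. The only terms containing $a$ are $d_1$ and $Gd_1$, so any mixed cover of $d_1$ or of $Gd_1$ uses the index $1$, and its value contains $Gd_1$. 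The same holds for $d_2$ with $b$, and the conditions for $d_3$ are trivial because $Gd_3=\emptyset$.
\item Every cover of $\{c\}$ uses $d_1$ or $d_2$, and $\{c\}\leq d_1$, $\{c\}\leq d_2$ are covers. Hence $K\{c\}=\{a,p\}\cap\{b,p\}=\{p\}$.
\item The cover $\{p\}\leq d_3$ gives $K\{p\}=Gd_3=\emptyset$. Thus $KK\{c\}=\emptyset\neq\{p\}=K\{c\}$.
\end{itemize}
Your argument does correctly show that $K$ fixes every value $Gd_1+\dots+Gd_n$, here $Gd_1$ and $Gd_2$. But $Kx$ is a meet of such values, and $K$ does not preserve meets, so fixing the values does not give $K(Kx)=Kx$.

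The missing idea is to iterate $H$ transfinitely.
\begin{itemize}
\item Your argument gives $HHy\leq Hy$ for every $y$, and $H$ is isotone. So the sequence $Hx\geq HHx\geq\cdots$, continued by meets at limit stages, is decreasing. Let $Kx$ be the value at which it stabilizes.
\item $K$ is idempotent by construction.
\item $K$ extends $G$, because already $HHd=Hd=Gd$ for $d\in D$. This is where (Eq.~D*3$'$) is really used.
\item Additivity is proved stage by stage. At successor stages it follows from additivity of $H$. At limit stages it follows from the $(2,\infty)$-distributive law together with monotonicity of the stages, which gives $\prod_{\alpha<\beta}(K^\alpha x+K^\alpha y)=\prod_{\delta,\gamma<\beta}(K^\delta x+K^\gamma y)$.
\end{itemize}
In the example above, the iteration stabilizes at $K\{c\}=\emptyset$.
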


\begin{proof}
Necessity is obvious in each case and no special assumption
on $\mathbf L$ is needed; actually,  $\mathbf L$ might be just 
a join semilattice. Recall that if $K$ is additive, then $K$ 
is isotone.
 
We now prove  the nontrivial directions.

(1) 
Given $G$ as in the assumptions,
let
\begin{equation} \tag{Case 1} 
\labbel{kisoa}
  Kx= \prod \{ \,  Gb_1 + \dots + Gb_n  \mid 
n \in \mathbb N^+, b_1, \dots, b_n\in D, 
x \leq b_1 + \dots + b_n  \,\}
 \end{equation} 

With the above definition $K$ is obviously isotone, hence
$K(x+y) \geq Kx$ and  $K(x+y) \geq Ky$,
thus $K(x+y) \geq Kx + Ky$. Conversely,
\begin{equation}\labbel{kkkkk4} 
\begin{aligned} 
  Kx + Ky &=
 \prod \{ \,  Gb_1 + \dots + Gb_n  \mid  b_1, \dots, b_n\in D, 
x \leq b_1 + \dots + b_n  \,\} 
\\
&+ \prod \{ \,  Gc_1 + \dots + Gc_m  \mid  c_1, \dots, c_m \in D, 
y \leq c_1 + \dots + c_m  \,\}
\\
&= ^{\text{$(2, \infty )$-dist}} 
 \prod \{ \,  Gb_1 + \dots + Gb_n + 
 Gc_1 + \dots + Gc_m \mid 
\\
& \qquad \qquad  b_1, \dots, c_m \in D, 
x \leq b_1 {+} \dots {+} b_n, y \leq c_1 {+} \dots {+} c_m   \,\}
\\
& \geq 
 \prod \{ \,  Gb_1 + \dots + Gb_n + 
 Gc_1 + \dots + Gc_m \mid 
\\
& \qquad \qquad  b_1, \dots, c_m \in D, 
x + y \leq b_1 {+} \dots {+} b_n{+} c_1 {+} \dots {+} c_m   \,\} 
\\
&= K(x+y)
 \end{aligned}
  \end{equation}     
where in
the middle identity
 we have used the infinitary distributivity assumption about $\mathbf L$. 

In the remaining parts of the proof
we will tacitly assume that $b_1, \dots, b_n\in D$. 
The assumption (D*1) shall be used in order to show that
$K$ extends $G$. In fact, if 
$x \in D$ and   
$x \leq b_1 + \dots + b_n$, then  
$Gx \leq Gb_1 + \dots + Gb_n$
by \eqref{isoa}, thus $Gx \leq Kx$.
On the other hand, 
if $x \in D$, we can take $n=1$
and $b_1=x$ 
in the defining set for $K$,
getting $Kx \leq Gx$.     

(2) If $G$
is extensive, then $K$ turns out to be extensive, as well, so (2)
follows from (1).

(3)
We need a preliminary definition.
 Let
\begin{multline} 
\labbel{kaddide1}
 Hx= \prod \{ \,  Gb_1 + \dots + Gb_i +Gb_{i+1}+ \dots + Gb_n  \mid  i \in \mathbb N, n \in \mathbb N^+, i \leq n,
\\
b_1, \dots, b_n\in D, 
x \leq b_1 + \dots + b_i +Gb_{i+1}+ \dots+ Gb_n  \,\},
 \end{multline} 
for $x \in L$.

We first observe that 
$HHx \leq Hx$, for every $x \in L$.
Indeed, if $x \leq b_1 + \dots + b_i +Gb_{i+1}+ \dots+ Gb_n$,
then, by the very definition of $Hx$,
we have  $Hx \leq Gb_1 + \dots + Gb_i +Gb_{i+1}+ \dots+ Gb_n$,
hence 
\begin{align}
&\{ \,  Gb_1 + \dots + Gb_n  \mid  
x \leq b_1 + \dots + b_i +Gb_{i+1}+ \dots+ Gb_n  \,\}
\subseteq 
\nonumber
\\
\labbel{bibba}
  &\{ \,  Gb_1 + \dots + Gb_n  \mid  
Hx \leq Gb_1 + \dots + Gb_i +Gb_{i+1}+ \dots+ Gb_n  \,\},
  \end{align} 
which implies $HHx \leq Hx$,
since all the sums $Gb_1 + \dots + Gb_n$
in \eqref{bibba}  belong to the set whose product defines 
$HHx$, taking $Hx$
in place of $x$ and 
 $i=0$ in \eqref{kaddide1}.   
 
Next, we show that if $x \in D$,
then also $Gx=Hx \leq HHx$, thus 
if $x \in D$, then $Gx=Hx = HHx$
by the above paragraph.   
First, arguing as in case (1) and using
\eqref{isoidea}, we get that if $x \in D$,
then $H x = Gx$. 
Moreover,
 if $x \in D$  and
 $Hx  \leq b_1 + \dots + b_i +Gb_{i+1}+ \dots+ Gb_n$,
then
\begin{equation}\labbel{purippa}      
Hx = Gx \leq Gb_1 + \dots + Gb_i +Gb_{i+1}+ \dots+ Gb_n
 \end{equation}
by \eqref{isoideabis}, hence
$Hx \leq HHx$, since 
$HHx$ is the product of 
all the sums in the right-hand side
of \eqref{purippa}. Thus $Hx = Gx $, for  $x \in D$.

However, we have no guarantee that 
$HHx=Hx$ holds for \emph{every} $x \in L$,
hence   a more involved definition is necessary. 
For $\alpha$ an ordinal, let
\begin{equation}\labbel{ord}      
\begin{aligned} 
K^0x&=x,
\\
  K^ {\alpha +1} x &= H K^ \alpha x \quad \text{ and } 
\\ 
 K^ \beta x &= \prod _{ \alpha  < \beta } K ^\alpha x \quad \text{ for $\beta$ limit.}
\end{aligned}
 \end{equation}
Then, for every $x \in L$,  pick 
the smallest ordinal $\alpha$ such that 
$K^ {\alpha +1} x=  K^ \alpha x$
and  set 
\begin{align}   
\labbel{kkkkk} \tag{Case 3}
&K x= K ^ \alpha x.
 \end{align}

Such an $\alpha$ exists, since 
$HHx \leq Hx$, for every $x \in L$, hence
it follows that the sequence 
$(K ^ \delta   x)$, $ \delta  $ an ordinal $> 0$,
is decreasing (note that 
$H$ is obviously isotone. This is needed in order
to show $H K ^ \beta x \leq K ^ \beta x$, for $\beta$ limit.)
Moreover, by transfinite induction on $\beta \geq \alpha $ 
\begin{equation}\labbel{dudda}
\text{if $x \in L$ and $K^ {\alpha +1} x=  K^ \alpha x$,
for some $\alpha$,
then $K^ {\beta} x =  K^ \alpha x$,
for every $\beta \geq \alpha$.} 
   \end{equation}    

Since we have showed that 
$HHx=Gx=Hx$, for $x \in D$, then
$K^2x = HK^1x=HHx=Gx=Hx=K^1x$,
for $x \in D$, hence $K$ extends $G$.
  
It is elementary to see  that  $K$ is  idempotent.
If we prove that 
$K^ \alpha (x + y) = K^ \alpha x + K^ \alpha y $,
for every ordinal $\alpha$ and all $x,y \in L$,
 we get that $K$ is additive, by \eqref{dudda}, taking $\alpha$ 
sufficiently large.

The proof is by transfinite induction on $\alpha$.
The result is obvious for $\alpha=0$.
Arguing as in \eqref{kkkkk4} 
and using $(2, \infty )$-distributivity, we get 
$H(x+y) = Hx + Hy$, for every 
$x,y \in L$, that is, the case
 $\alpha=1$. We now prove the remaining cases.

(Successor step) If $K^ \alpha (x + y) = K^ \alpha x + K^ \alpha y $
holds for some $\alpha \geq 1$ and all $x,y \in L$,
then 
\begin{align*} 
 K^ {\alpha+1} (x + y) & =
HK^ \alpha (x + y) = H(K^ \alpha x + K^ \alpha y )
\\
& =
 HK^ \alpha x + HK^ \alpha y =
K^ {\alpha+1}x+K^ {\alpha+1}y,
 \end{align*}
where we have used the inductive hypothesis, and then
the (already proved) case $ \alpha =1$ with  
$K^ \alpha x + K^ \alpha y $ in place of $x+y$.  

(Limit step)
  Suppose that $\beta$ is a limit ordinal and that 
$K^ \alpha (x + y) = K^ \alpha x + K^ \alpha y $
holds for every $\alpha < \beta $ and all $x,y \in L$. Then
\begin{align*} 
 K^ \beta (x+y) &= \prod _{ \alpha  < \beta } K ^\alpha (x+y)
=\prod _{ \alpha  < \beta } (K ^\alpha x+ K ^ \alpha y)
 =^* \prod _{ \delta , \gamma  < \beta } (K ^ \delta  x+ K ^ \gamma  y)
\\
&= ^{\text{$(2, \infty )$-dist}} 
\prod _{ \delta   < \beta } K ^ \delta   x + \prod _{ \delta   < \beta } K ^ \delta  y
= K^ \beta (x) + K^ \beta (y)
 \end{align*} 
where the equality marked by $^*$ follows from
the already proved fact that if
$\alpha \geq \delta $ and $ \alpha \geq \gamma  $,
then  $K ^\alpha x \leq K ^ \delta  x$
and $K ^\alpha y \leq K ^ \gamma   y$,
thus $K ^\alpha x +  K ^\alpha y
\leq K ^ \delta  x + K ^ \gamma   y$, hence
$\prod _{ \alpha  < \beta } (K ^\alpha x+ K ^ \alpha y)
\leq K ^ \delta  x+ K ^ \gamma  y$.
Since this is verified 
for every   $\delta, \gamma < \beta $, we  
get  $\prod _{ \alpha  < \beta } (K ^\alpha x+ K ^ \alpha y)
\leq \prod _{ \delta , \gamma  < \beta } (K ^ \delta  x+ K ^ \gamma  y)$,
 the reverse inequality being obvious.

(4) follows immediately from (3), since if $G$ is extensive,
then \eqref{cloa} is equivalent to the conjunction of
\eqref{isoidea} and  \eqref{isoideabis}.
Indeed,  \eqref{cloa} is the special case $i=0$
of  \eqref{isoidea}. Conversely, 
if $G$ is extensive, then
\eqref{cloa} implies  \eqref{isoidea}.
Condition  \eqref{isoideabis} is trivially satisfied 
when $G$ is extensive. Finally, notice that if 
$G$ is extensive, then the definition in \eqref{kkkkk} 
provides an extensive operation
$K$. 

However, a direct proof of (4) is much easier, just let
\begin{equation} \tag{Case 4} 
\labbel{kcloa}
  Kx= \prod \{ \,  Gb_1 + \dots + Gb_n  \mid 
n \in \mathbb N^+, b_1, \dots, b_n\in D, 
x \leq Gb_1 + \dots + Gb_n  \,\}.
 \end{equation} 
Arguing as above and using (D*4)
we get that $K$ extends $G$. 
Moreover, $K$ is obviously extensive, in particular,
$KKx \geq Kx$, while the same argument we have used
in order to prove that $HHx \leq Hx$ in (3)
shows that  $KKx \leq Kx$, for every $x \in L$,
hence $K$ is idempotent. 
The argument in (1), using $(2, \infty )$-distributivity,
shows that $K$ is additive.
\end{proof}

\arxiv{Of course, all the dual results hold,
for example, under the assumptions in Lemma \ref{lem}, 
 but assuming (JID) in place of (MID),
 $G$ can be extended to an operation $K$ on $\mathbf L$
such that
$K$ is a multiplicative interior operation   if and only if 
$G$ is contractive and,  for every
$n \in \mathbb N^+$  and  all $a_1, \dots, a_n, b \in D$,
\begin{equation}\labbel{intea}
 Ga_1 \cdot Ga_2 \cdots   Ga_n \leq b   \text{ implies }
 Ga_1 \cdot Ga_2 \cdots   Ga_n \leq Gb.
\end{equation} }

\section{Applications} \labbel{presap}

\begin{theorem} \labbel{app}
The classes of semilattices, Boolean algebras
and dual Heyting algebras with an additive 
(respectively, additive and extensive, 
additive and idempotent, additive closure)
unary operation all have the hyperamalgamation property,
in particular, the amalgamation property.
  \end{theorem}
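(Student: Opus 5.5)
The plan is to apply Theorem \ref{pres} once for each base class $\mathcal S^-$ (semilattices, Boolean algebras, dual Heyting algebras) and each $n \in \{1,2,3,4\}$. Two hypotheses have to be checked.

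\emph{Hypothesis (1) of Theorem \ref{pres}.} We need that $\mathcal S^-$ has the hyperamalgamation property. For semilattices this is Theorem \ref{celhan}(3). For Boolean algebras, Theorem \ref{celhan}(1) gives the $1$-step property, which is the case $n=1$ of Definition \ref{hypdef}. For dual Heyting algebras, Theorem \ref{celhan}(2) says Heyting algebras satisfy the dual of the $1$-step property; dualizing, dual Heyting algebras satisfy the $1$-step property with respect to join.

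\emph{Hypothesis (2): suitability is effective for (Dn).} Take $\mathbf E^-\in\mathcal S^-$, $D\subseteq E$ and $G\colon D\to E$ satisfying (D*n). I will embed $\mathbf E^-$ into some $\mathbf L\in\mathcal S^-$ that is a complete bounded lattice satisfying (MID). The embedding must preserve joins, so that the inequalities in (D*n) remain true in $\mathbf L$. Lemma \ref{lem} then extends $G$ to a $K$ on $\mathbf L$ satisfying (Dn), and $\mathbf F=(\mathbf L,K)$ is the required extension and expansion.

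The completions are chosen as follows.
\begin{itemize}
\item For Boolean algebras, embed into the powerset of the Stone space. A powerset is complete and completely distributive, so (MID) holds.
\item For dual Heyting algebras, dualize Esakia/Priestley duality: embed into the lattice of up-sets (or down-sets, whichever orientation makes co-implication available) of the dual poset. Such a lattice is a complete ring of sets, hence completely distributive. It carries both Heyting and co-Heyting operations, and the embedding preserves joins, meets and the co-implication.
\item For semilattices I use the ideal-style completion $\mathbf E^-\mapsto\{$down-sets of $E\}$ via $e\mapsto{\downarrow}e$. This map preserves order but not joins, so it will not do.
\end{itemize}

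The real obstacle is the semilattice case. One needs a completely distributive complete lattice $\mathbf L$ containing $\mathbf E^-$ as a join-subsemilattice. I would try the lattice of \emph{filters-complement} type sets, namely $\mathbf L=\{U\subseteq F: U$ a down-set$\}$ where $F$ is the set of prime filters of $\mathbf E^-$ (complements of ideals), together with the map $e\mapsto\{P: e\in P\}$. In a semilattice a set whose complement is an ideal is exactly a proper filter-like up-set $P$ such that $x+y\in P$ iff $x\in P$ or $y\in P$. There are enough of these to separate points, since the complement of ${\downarrow}e$ works. The map therefore turns joins into unions and is injective, and $\mathbf L$, a lattice of sets closed under arbitrary unions and intersections, is completely distributive. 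The point to double-check is that only $+$ needs to be preserved; no meets are required, since the semilattice language contains only $+$.

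With both hypotheses verified, Theorem \ref{pres} gives the hyperamalgamation property and hence the superamalgamation property. Superamalgamation implies the strong amalgamation property, so in particular the amalgamation property holds.
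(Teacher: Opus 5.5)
Your proposal is correct and follows the paper's own proof: Theorem \ref{celhan} gives hyperamalgamation of the base classes, and a join-preserving embedding into a complete lattice satisfying (MID), combined with Lemma \ref{lem}, gives effectiveness of suitability, after which Theorem \ref{pres} applies. You make explicit the Stone, Esakia-type and union-of-sets representations that the paper only cites; for dual Heyting algebras the paper instead uses that complete dual Heyting algebras satisfy (MID) automatically. One small slip: under inclusion the sets $\{P : e \in P\}$ are up-sets, not down-sets, of your family $F$, so take $\mathbf L$ to be the up-sets of $F$, or simply the full power set of $F$, which already suffices.
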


 \begin{proof}
Known results \cite[Theorems 9.9, 9.10]{Ha},  \cite[Theorem 2.1]{Ko},
\cite[Theorem 2.39]{GM}
  show that each algebra in each class
can be extended to an algebra with a complete   distributive
lattice. Complete Heyting algebras
satisfy (JID) \cite[p. 174]{BD}, so that dual Heiting 
algebras satisfy (MID). In the remaining
cases, the quoted results
provide embeddings into a power set algebra, which is 
completely distributive in view of \cite[Theorem 14.5]{Ko}.

If $\mathbf S$ is, say, the class of Boolean algebras 
with an  operation satisfying (Dn) and $\mathbf S^-$ is the class
 of Boolean algebras, then by the above paragraph and
Lemma \ref{lem}, suitability is effective for clause (Dn).
By Theorem \ref{celhan} the classes under consideration
have the hyperamalgamation property, hence the conclusion 
follows from Theorem \ref{pres}.
\end{proof}  

We have stated Theorem \ref{app}
in terms of dual Heyting algebras for uniformity;
of course, the theorem holds for Heyting algebras
in the standard sense when dealing with multiplicative
``interior'' operations.

\begin{definition} \labbel{list} 
The results of the present paper can be easily merged 
with results from \cite{sapimpap}. There
we have considered the following properties
of a unary operation 
in an ordered  structure:   
   (A1e) extensive, 
   (A1c) contractive,
   (A2) idempotent,
   (A2e) idempotent and extensive, 
   (A2c) idempotent and  contractive,
   (A3) involutive,
   (B1)  isotone, 
    (B1e)  isotone and extensive, 
   (B1c)  isotone and contractive,
   (B2) isotone and idempotent,
   (B3) a closure operation (that is, 
 isotone, extensive and idempotent),
   (B4) an interior operation (that is, 
 isotone, contractive and idempotent), 
   (B5) an antitone operation,
and, for $n$-ary operations,
 (C1)$_{i,j,n}$  
isotone on the first $i$ components and
antitone on the last $j$ components,
and, possibly, 
 (C2)$_{i,j,n,h}$  
 satisfying also   $x_1 \leq F\bar x, \dots, 
x_h \leq F\bar x$, for some $h \leq i$,
   more generally,
 for lattice-ordered structures,    
 (C3)$_{i,j,n,t}$ 
 satisfying    $t(x_1, \dots, x_i) \leq F\bar x$,
for some given lattice term $t$.
\end{definition}   

\begin{theorem} \labbel{appb}
The classes of semilattices, Boolean algebras
and dual Heyting algebras with an  operation 
satisfying one of the conditions (A1e)-(D4)
from Definitions \ref{list} and  \ref{operat}  
all have the hyperamalgamation property,
in particular, the amalgamation property.
 
More generally, the superamalgamation property 
is preserved by adding any family of such operations, possibly 
satisfying distinct properties.
 \end{theorem}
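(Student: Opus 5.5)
The statement has two parts, and I would handle them separately.

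\emph{First part: a single operation.} For the additive cases (D1)--(D4) this is exactly Theorem \ref{app}, so nothing new is needed there.

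For the conditions (A1e)--(C3) of Definition \ref{list}, I would run the scheme of Theorem \ref{pres} again, but with the suitability conditions of \cite{sapimpap} in place of (D*n).
\begin{itemize}
\item By \cite{sapimpap}, those conditions are preserved whenever the reduct $\mathbf E^-$ superamalgamates $\mathbf A^-$ and $\mathbf B^-$. A fortiori they are preserved when $\mathbf E^-$ hyperamalgamates, since hyperamalgamation implies superamalgamation once a $0$ is present. A $0$ can always be adjoined, as in the proof of Theorem \ref{celhan}.
\item Also by \cite{sapimpap}, suitability is effective in every complete bounded lattice. This gives an extension $\mathbf F$ of $\mathbf E^-$ with a total operation extending $G$.
\item $\mathbf F^-$ extends $\mathbf E^-$, so it still hyperamalgamates; this is the last paragraph of the proof of Theorem \ref{pres}.
\end{itemize}

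The only issue is to choose $\mathbf E^-$ so that both requirements hold simultaneously. I would do this as in the proof of Theorem \ref{app}:
\begin{itemize}
\item take a hyperamalgamating $\mathbf E^-$ given by Theorem \ref{celhan};
\item embed it into a complete (distributive, or power set) algebra of the same class, using \cite{Ha,Ko,GM}.
\end{itemize}
This embedding preserves hyperamalgamation, because the conditions in \eqref{heq} refer only to $\mathbf A$, $\mathbf B$, $\mathbf C$ and to the hypothesis $a\le a_1+b_1$. That hypothesis is reflected by an embedding, so it also holds in $\mathbf E^-$.

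Hence in all cases $\mathcal S$ has the hyperamalgamation property, and so also the amalgamation property.

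\emph{Second part: families of operations.} For a family $(K_i)_{i\in I}$ I would prove the weaker claim, superamalgamation, by iterating a single extension step.
\begin{itemize}
\item Let $\mathbf E$ be a complete superamalgamating model of the reducts.
\item Define the partial functions $G_i$ on $D=A\cup B$ as in \eqref{kd}. Each $G_i$ satisfies its suitability condition: for the additive ones this is Proposition \ref{suithyp}, and for the others it is \cite{sapimpap}.
\item Extend each $G_i$ independently to a total $K_i$ on the same complete lattice $\mathbf E$, by Lemma \ref{lem} or the corresponding lemma in \cite{sapimpap}.
\end{itemize}
The extensions can be made independently because the lattice $\mathbf E$ is fixed and each operation is built from its own $G_i$ alone. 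No extension of the underlying structure is needed once $\mathbf E$ is complete and satisfies (MID).

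\emph{Main obstacle.} The difficulty is the first bullet of the second part: a single $\mathbf E$ must hyperamalgamate whenever any additive operation occurs. If only superamalgamation is available, Proposition \ref{suithyp} does not apply.

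I would therefore use the hyperamalgamating $\mathbf E$ built in the first part throughout. The non-additive cases need only superamalgamation, which that $\mathbf E$ also has, so one model serves every operation in the family. Superamalgamation of the resulting expansion then follows exactly as in Theorem \ref{pres}.
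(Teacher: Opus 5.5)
Your proposal is correct and is essentially the paper's own argument. You work throughout with a single complete hyperamalgamating model $\mathbf E^-$ of the reducts, which also superamalgamates; you run the arguments of \cite{sapimpap} for (A1e)--(C3) and Proposition \ref{suithyp} with Lemma \ref{lem} for (D1)--(D4); and you observe that, since $\mathbf E^-$ does not depend on the operations, each operation can be extended independently of the others. You spell out details the paper leaves implicit, such as the fact that a complete extension of $\mathbf E^-$ still hyperamalgamates. The one small variation is that you get superamalgamation from hyperamalgamation via an adjoined $0$, whereas the paper notes directly that the structures built in Theorem \ref{celhan} superamalgamate.
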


\begin{proof}
The arguments in \cite{sapimpap}
deal with the superamalgamation property but,
once we know that   semilattices, Boolean algebras
and dual Heyting algebras have the hyperamalgamation property,
we can repeat all the arguments in \cite{sapimpap}
by using a hyperamalgamating structure.
Note that, in each case, the structures we have constructed
in the proof of Theorem \ref{celhan} have also the
superamalgamation property, even when a constant $0$
is not present, so the arguments from \cite{sapimpap}
carry over.   

As for the second statement, as in \cite{sapimpap},
just observe that the construction of, say, $\mathbf E^-$
does not depend on the operation to be added.
Moreover, given a set of operations, each operation
has no influence on the other operations, hence the construction
can be carried over in a uniform way. 
\end{proof}
 
\begin{remark} \labbel{genboh}
   Results analogue to Theorems \ref{app} and \ref{appb} 
probably apply to many more classes of structures. 
By Theorem \ref{pres}, all we needed in the proof of Theorem \ref{app}
   are the hyperamalgamation property and
effective suitability (Definition \ref{operat}), properties
which look quite natural and relatively weak,
though, in order to prove the latter property, we used the
rather strong requirement of the infinitary distributivity
property (MID). 
However, (MID) has been used only in Section \ref{extsec},
hence it is hoped that the methods in the present paper
can be applied to many more theories, once one finds
suitable extension lemmas parallel to \ref{lem}.
In this connections, methods and results from  
various papers by M. Gehrke
and coauthors might be relevant,
e.~g., \cite{GH}.
 \end{remark}

Finally, we present a decidability result
completely analogue to \cite{sapimpap}
and which relies only on  Section \ref{extsec},
with no use of amalgamation properties. 

\begin{theorem} \labbel{decid}
Suppose that $T$ is a locally finite universal theory 
of semilattice structures and suppose that 
 every finite model of $T$ 
 can be extended to a finite 
distributive lattice-ordered model of $T$. 

Suppose that (W) is any one of the properties
(A1e) -  (D4)\footnote{In case (C3), for consistency,
we assume that $T$ contains the theory of distributive lattices.}  
 listed in  Definitions \ref{list}  and  \ref{operat}.
Let $\mathscr L'=\mathscr L \cup \{ K \} $,
where $K$ is a new  operation symbol of corresponding arity.
 Let $T^{\text{(W)}}$ in the language $\mathscr L'$
be the extension of $T$  obtained by
adding axioms   asserting that $K$ satisfies (W). Then
the following hold. 
  \begin{enumerate}   
 \item 
If $\varphi$  is a  universal  
 sentence in $\mathscr L'$
and $\varphi$  fails in some model of  $T^{\text{(W)}}$,
then $\varphi$  fails in some finite model of  $T^{\text{(W)}}$.
\item
Suppose  that $T$ has an effective
finite axiomatization, $\mathscr L'$ is finite and  
 there is an 
effectively computable  function 
$h_{_T}$ such that, for every $n \in \mathbb N$,
every model of $T$ generated by  $ n$ elements  
can be extended to a distributive lattice-ordered model of $T$
of cardinality $\leq h_{_T}(n)$.
Then the set of all the 
universal 
 consequences
of  $T^{\text{(W)}}$ is decidable.
  \end{enumerate}

More generally, the above items (1) - (2) hold if we add 
any number of operations, possibly of distinct arities, 
possibly satisfying distinct
properties chosen from (A1e) -   (D4).
 \end{theorem}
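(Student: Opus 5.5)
Item (1) is a finite model property, and I would prove it by restricting a counterexample to a finite substructure and then rebuilding $K$ with Lemma \ref{lem}. Let $\varphi=\forall \bar x\,\psi(\bar x)$ with $\psi$ quantifier-free, and suppose $\varphi$ fails in $\mathbf M\models T^{\text{(W)}}$ at a tuple $\bar m$.

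First, let $S$ be the finite set of values in $\mathbf M$ of all subterms of $\psi(\bar m)$ (including the $K$-subterms). Let $\mathbf N$ be the $\mathscr L$-substructure of the reduct $\mathbf M^-$ generated by $S$. Since $T$ is universal, $\mathbf N\models T$, and local finiteness makes $\mathbf N$ finite. By hypothesis, $\mathbf N$ extends to a finite distributive lattice-ordered model $\mathbf L\models T$. A finite distributive lattice is complete, bounded and satisfies (MID), because every meet is a finite meet.

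Next, put $D=\{\,s\in S : Ks \text{ occurs as a subterm value}\,\}$ and define $G\colon D\to L$ by $Gd=K_{\mathbf M}d$. The values $K_{\mathbf M}d$ lie in $S\subseteq N\subseteq L$. I then check that $G$ satisfies the relevant suitability condition: (D*n) for cases (D1)--(D4), and, for (A1e)--(C3), the conditions from \cite{sapimpap}. These conditions consist only of inequalities among joins of elements of $D$ and $G(D)$. They hold in $\mathbf M$ because $K_{\mathbf M}$ is total and satisfies (W), and this is exactly the ``necessity'' direction of Lemma \ref{lem}, which needs only a join semilattice. They transfer to $\mathbf L$ because $\mathbf N\subseteq\mathbf L$ is an embedding, so joins and order among elements of $N$ are preserved.

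Then Lemma \ref{lem} (respectively \cite[Lemma 3.1]{sapimpap} for the cases of Definition \ref{list}) gives a total $K$ on $\mathbf L$ that satisfies (W) and extends $G$. The expansion $(\mathbf L,K)$ is a finite model of $T^{\text{(W)}}$. Every subterm of $\psi(\bar m)$ evaluates in $(\mathbf L,K)$ to the same value as in $\mathbf M$, by induction on terms, using that the $\mathscr L$-operations agree on $N$ and that $K$ agrees with $G$ on $D$. Every atomic formula of $\psi$, whether an equation or an $\mathscr L$-relation, therefore has the same truth value, since $\mathbf N$ is a substructure of both $\mathbf M^-$ and $\mathbf L$. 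Hence $\varphi$ fails in $(\mathbf L,K)$.

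The main obstacle is the suitability check in the last cases: for (C3) I need $t(\dots)$ computed in $\mathbf L$ to agree with $\mathbf M$, which is why $T$ is assumed to include distributive lattices there, so that $t$ is in the language and $N$ is closed under it. For $n$-ary operations I take $D\subseteq N^n$ with the $n$-ary extension lemma of \cite{sapimpap}.

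For (2), the universal consequences of $T^{\text{(W)}}$ are recursively enumerable because the axiomatization is effective. For the complement, if $\varphi$ fails, the construction above produces a countermodel of size at most $h_{_T}(k)$, where $k$ is the number of subterms of $\varphi$. Only finitely many $\mathscr L'$-structures of that size exist up to isomorphism, and each can be checked effectively against the finite axioms and against $\varphi$. So the set of universal consequences is decidable.

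For several added operations, I apply the same restriction-and-extension step to each operation separately on the common finite $\mathbf L$. This works because, as noted in the proof of Theorem \ref{appb}, the operations do not interact.
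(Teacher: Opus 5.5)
Your proposal is correct and follows the paper's route. The paper's proof defers to that of \cite[Theorem 5.1]{sapimpap}: restrict a countermodel to the finite submodel generated by the subterm values, extend it to a finite distributive lattice-ordered model of $T$, restrict $K$ to a partial function $G$ satisfying the relevant suitability condition, and re-extend $G$ by the extension lemma. The only new ingredient the paper adds is the one you also identify, namely that finite distributive lattices satisfy (MID), so Lemma \ref{lem} applies in cases (D1)--(D4).
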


\begin{proof}
The proof goes exactly as in the proof of
\cite[Theorem 5.1]{sapimpap}.
Here we work with finite distributive lattices,
so the assumption (MID) applies, since  (MID) is equivalent
to distributivity in finite lattices.
\end{proof}
 
In particular, Theorem \ref{decid} applies when
$T$ is the theory of Boolean algebras,
the theory of distributive lattices or the theory
of semilattices. 
As in \cite{sapimpap}, the assumption that $T$ is finitely axiomatizable in (2)
can be weakened: it is enough to assume that 
it is decidable whether any given finite model 
is a model of $T$.

\begin{problem} \labbel{prob}
  \begin{enumerate}[(a)]    
\item 
Do Heyting algebras (not the dual!)
satisfy the hyperamalgamation property?
\item
Provide results similar to the results
proved in the present paper  by taking into
account further properties of $K$, 
or by relating the properties of two distinct operations,
say, asking that $K^nK_1^m(x)=K_1^pK^q(x)$,
for some $n,m,p,q \in \mathbb N$.  
\item
In particular, as in \cite{sapimpap}, consider
distinct operations required to satisfy a set of
comparability properties. Some cases
are straightforward, compare \cite{sapimpap},
but it is likely that the case dealing with
idempotent operations needs a few nontrivial 
details.
  \end{enumerate} 
 \end{problem}

\section{Further remarks} \labbel{fur}

\begin{remark} \labbel{nonimpl}
The hyperamalgamation property is actually a stronger  property
 than the superamalgamation property. 
For example, consider the class $\mathcal B_3$  of bounded lattices
(with constants ``top''  $t$ and ``minimum''  $m$)
of height $\leq 3$. 
Members of $\mathcal B_3$ are essentially
``sets of atoms'', together with two more elements   $t$ and $m$.
Thus, given a triple  $\mathbf A$, $\mathbf  B$, $\mathbf  C$ to be amalgamated,
an amalgamating structure $\mathbf  D$ can be obtained by just
considering the set of atoms of both $\mathbf A$ and $\mathbf  B$,
identifying the atoms belonging to $C$. The order is given by
the trivial relations: everything is greater than $m$ and smaller than $t$
and no more nontrivial order relations hold.  
The superamalgamation property holds straightforwardly,
precisely because the order is trivial; indeed, we never have nontrivial
inequalities of the form $a \leq b$. 

On the other hand, the hyperamalgamation property fails 
for $\mathcal B_3$.
Let $C= \{ m, t \} $,
let $\mathbf A$ have at lest two atoms $a$ and  $a_1$
and  $\mathbf B$ have at lest an atom   $b_1$.
In any strongly amalgamating structure,
$a \leq a_1 + b_1$, but $m$ is the only element 
of $\mathbf  C$ which is $\leq b_1$, so if we have
$a_2 \leq a_1 + m$, for some element $a_2$,   
then necessarily $a_2 \leq a_1$.
 Iterating, if there are elements as in the conclusion of
the hyperamalgamation property, we get
$b_2 \leq b_1$, $a_3 \leq a_2 \leq a_1$,\dots, 
$a \leq a_n \leq \dots \leq a_2 \leq a_1$.
But $a \leq a_1$ does not hold in $\mathbf A$,
so we have got a contradiction.  
 \end{remark}   

We now can show that the hyperamalgamation assumption 
is necessary in Proposition \ref{suithyp}.

\begin{example} \labbel{necess}
(a) Consider the class $\mathcal B_3$ as above and add
an additive operation $K$.
Let $C= \{ m, t \} $ with $Km=m$, $Kt=t$, $A= \{ m, a, t \} $
with $Ka=m$ and $B= \{ m, b, t \} $
with $Kb=b$. Then $K$ 
is additive and idempotent in $\mathbf A$, $\mathbf  B$ and $\mathbf  C$.
However, in any amalgamating structure belonging to  $\mathcal B_3$
we necessarily have $t=a+b$, but 
$Ka + Kb=m+b=b \neq t =Kt$, so that
$K$ cannot be additive.

In particular, 
the assumption that $\mathbf E^-$
is a hyperamalgamating structure is necessary 
in  Proposition \ref{suithyp}. In words,
``suitability'' is not necessarily preserved
by superamalgamating extensions: we do need 
hyperamalgamating extensions.  

(b) The above example treats the cases (D1) and (D3),
but can be easily modified in order to deal with the remaining cases.

Consider the class $\mathcal B_4^*$  of bounded lattices
of height $\leq 4$ and such that there is exactly 
one element $s$ strictly preceding $t$.
Again, 
members of $\mathcal B_4^*$ are essentially
``sets of atoms'', together with $3$  more elements   $t$, $s$  and $m$,
so that, as in Example \ref{nonimpl}, $\mathcal B_4^*$ has 
the superamalgamation property.

Now consider a language with also
an additive operation $K$,
let $C= \{ m, s, t \} $ with $Km=m$, $Ks=Kt=t$, $A= \{ m, a, s, t \} $
with $Ka=a$ and $B= \{ m, b, s, t \} $
with $Kb=b$. Then $K$ 
is a closure operation in $\mathbf A$, $\mathbf  B$ and $\mathbf  C$.
However, in any strongly amalgamating structure in $\mathcal B_4^*$,
$a+b=s$, so that $K(a+b)=Ks=t\neq s = a+b=Ka+Kb$,
hence we cannot have a strongly amalgamating structure in which  
$K$ is additive.
 \end{example}

\acknowledgement{We thank an anonymous reviewer of
\cite{sapimpap} for many interesting comments and
suggestions.}


\begin{thebibliography}{BD}    


\bibbitem{BD}
Balbes, R., Dwinger, P.,
 \emph{Distributive lattices},
University of Missouri Press, Columbia, MO
(1974).

\bibbitem{BHKK} 
Beyarslan, \"O.,  Hoffmann,  D. M., Kamensky,  M., Kowalski,
P.,
\emph{Model theory of fields with free operators in positive characteristic},
Trans. Amer. Math. Soc. \textbf{372}, 5991--6016  (2019). 


\bibbitem{BGR} Bruttomesso, R., Ghilardi, S., Ranise, S. 
\emph{Quantifier-free interpolation in combinations of equality 
interpolating theories}, ACM Trans. Comput. Log. \textbf{15}: Art. 5, 34 (2014).

\bibbitem{CHK} 
Chajda, I., Hala\v{s}, R., K\"{u}hr, J.  
\emph{Semilattice structures},
Research and Exposition in Mathematics
\textbf{30},
Heldermann Verlag, Lemgo (2007).


\bibbitem{E}  Ern\'e, M.,
 \emph{Closure},
in Mynard, F.,  Pearl E. (eds), 
\emph{Beyond topology},
  Contemp. Math.
\textbf{486},
Amer. Math. Soc., Providence, RI,
163--238
(2009).


\bibbitem{GM}
 Gabbay, D. M., Maksimova, L., 
\emph{Interpolation and definability. Modal and intuitionistic logics},
Oxford Logic Guides
\textbf{46}, The Clarendon Press, Oxford University Press, Oxford
(2005).

\bibbitem{GH} Gehrke, M.,   Harding, J.,
\emph{Bounded lattice expansions}, J. Algebra
 {\bf 238}, 345--371  (2001).


\bibbitem{GP} Guzy, N., Point,  F.,
 \emph{Topological differential fields},
 Ann. Pure Appl. Logic
 \textbf{161},
570--598 (2010).


\bibbitem{GG} 
Ghilardi, S.,  Gianola, A., 
\emph{Modularity results for interpolation, amalgamation and superamalgamation}, 
Ann. Pure Appl. Logic \textbf{169}, 731--754 (2018).


\bibbitem{G} Gr\"atzer, G., \emph{Lattice theory: foundation},
 Birkh\"auser/Springer Basel AG, Basel, 2011. 


\bibbitem{Ha}  Harzheim, E., \emph{Ordered sets},  
Advances in Mathematics \textbf{7},  New York, 2005.


\bibbitem{H} 
Hodges, W., \emph{Model theory},
 Encyclopedia of Mathematics and its Applications \textbf{42},
 Cambridge University Press, Cambridge, 1993. 

\bibitem{J} J{\'o}nsson, B.,
\emph{Universal relational systems},
Math. Scand. \textbf{4}, 193--208  (1956).   


\bibbitem{J1} B. J{\'o}nsson,
\emph{Extensions  of  relational  structures}, 
in \emph{Theory  of  Models}  (Proc.  Internat.  AP 
Sympos.  Berkeley,  1963),  North-Holland,  Amsterdam,   146--157  (1965)



\bibbitem
{KH}  Kihara, H., Ono, H.,
\emph{Interpolation properties, Beth definability
 properties and amalgamation properties for substructural logics},
 J. Logic Comput. \textbf{20},   823--875 (2010).


\bibbitem{KMPT} 
 Kiss, E. W., M\'arki, L., Pr\"ohle,  P.,  Tholen,  W.,
\emph{Categorical algebraic properties. 
A compendium on amalgamation, congruence extension, 
epimorphisms, residual smallness, and injectivity},
 Studia Sci. Math. Hungar. \textbf{18},  79--140  (1982). 

\bibitem{Ko}  
Koppelberg, S.,
\emph{Handbook of {B}oolean algebras. {V}ol. 1},
North-Holland Publishing Co., Amsterdam
(1989).


\bibbitem{ecca}  Lipparini, P.,
\emph{Existentially complete closure algebras},
Boll. Un. Mat. Ital. D (6)
\textbf{1},
 13--19 (1982).

\bibbitem{apuvar} Lipparini, P.,
\emph{Varieties defined by basic equations have the amalgamation
  property},
{Comm. Algebra} \textbf{52},
 4786--4805  (2024). 

\bibbitem{sapimpap} 
 Lipparini, P., \emph{Preservation of superamalgamation by expansions}, 
Fund. Math. {\bf 272},  205--237  (2026).

\arxiv{

\bibbitem{apu}  P. Lipparini, \emph{The strong amalgamation property into union},
 arXiv:2103.00563, 1--36 (2021).

}

\bibbitem{MT}  McKinsey, J. C. C., Tarski, A., \emph{The algebra of
   topology}. Ann. of Math.  \textbf{45} (1944), 141--191



\bibbitem{MMT}  Metcalfe, G.,  Montagna,  F.,  Tsinakis, C., 
\emph{Amalgamation and interpolation in ordered algebras}, J. Algebra 
\textbf{402},  21--82 (2014).


\bibbitem{Sc} Scowcroft, P.,
\emph{Existentially closed closure algebras},
Notre Dame J. Form. Log. \textbf{61},
623--661 (2020).


\bibbitem{Se} Servi, M.,
\emph{Sulla meno fine topologia ottenuta per estensione da una
              infratopologia generalizzata},
Univ. e Politec. Torino Rend. Sem. Mat.
\textbf{23},
237--248
(1963/64).


\end{thebibliography}
\end{document}